\newtheorem{theorem}{Theorem}[section]
\newtheorem{lemma}[theorem]{Lemma}
\newtheorem{proposition}[theorem]{Proposition}
\theoremstyle{definition}
\theoremstyle{remark}
\newtheorem{remark}[theorem]{Remark}
\numberwithin{equation}{section}
\begin{document}

\title{On Chen's biharmonic conjecture for hypersurfaces in $\mathbb R^5$}

\author{Yu Fu}
\address{School of Mathematics, Dongbei University of Finance and Economics,
Dalian 116025, P. R. China}\email{yufu@dufe.edu.cn}


\author{Min-Chun Hong}
\address{Department of Mathematics, The University of Queensland, Brisbane,
QLD 4072, Australia} \email{hong@maths.uq.edu.au}

\author{Xin Zhan}
\address{School of Mathematics Science, Dalian University of Technology,
Dalian 116023, P. R. China} \email{zhanxin@mail.dlut.edu.cn}

\subjclass[2010]{Primary 53C40, 58E20; Secondary 53C42}



\keywords{Biharmonic maps, Biharmonic submanifolds, Chen's
conjecture,}

\begin{abstract}
 A longstanding conjecture on biharmonic submanifolds, proposed by Chen
in 1991, is that {\it any biharmonic submanifold in a Euclidean
space is minimal}. In the case of a hypersurface $M^n$ in $\mathbb
R^{n+1}$, Chen's conjecture was settled in the case of $n=2$ by Chen
and Jiang around 1987 independently.  Hasanis and Vlachos in 1995
settled Chen's conjecture for a hypersurface with $n=3$. However,
the general  Chen's conjecture on a hypersurface $M^n$ remains open
for $n>3$. In this paper, we  settle Chen's conjecture for
hypersurfaces in $\mathbb R^{5}$ for  $n=4$.
\end{abstract}

\maketitle \markboth{Fu, Hong and Zhan} {On Chen's biharmonic
conjecture for hypersurfaces }

\section{Introduction}
 In 1983,  Eells and  Lemaire \cite{Eells} introduced the  concept of biharmonic
maps between Riemannian manifolds. Since then,  biharmonic maps  have
been extensively studied by mathematicians. In particular, geometers
investigated a special class of biharmonic maps named {\em
biharmonic immersions}. An immersion
$\phi:(M^n,g)\longrightarrow(N^m,h)$ is biharmonic  if and only if
its mean curvature vector field $\overrightarrow{H}$ satisfies the
fourth-order semi-linear elliptic equation  (e.g.
\cite{CMO2001},\cite{Oniciuc2002}, \cite{Oniciuc2012})
\begin{eqnarray} \label{biharmonic1}
\Delta\overrightarrow{H}+{\rm trace}\,
R^{N}(d\phi,\overrightarrow{H})d\phi=0,
\end{eqnarray}
where $\Delta$ is the Laplacian of $M$, $\overrightarrow H$ is the
mean curvature vector of the immersion and $R^N$ is the curvature
tensor of $N$.

Biharmonic submanifolds have attracted a lot of attentions  and many  results on biharmonic submanifolds
were obtained in  past two decades (e.g. [3-5, 8, 16-30, 32,
35, 38]).

In the Euclidean ambient space, biharmonic submanifolds are defined
via the geometric condition $\Delta \overrightarrow{H}=0$, or
equivalently $\Delta^2 \phi=0$, which was originally proposed by B.
Y. Chen in his pioneering work of finite type theory in the middle
of 1980s (c.f. \cite{Chen1991}, \cite{chenbook2015}). It is easy to
see from the definition that minimal submanifolds are automatically
biharmonic.

B. Y. Chen \cite{Chen1991} in 1991 proposed a well-known conjecture
in the following:

{\bf Chen's conjecture}: {\em Any biharmonic submanifold in the
Euclidean space $\mathbb R^m$ is minimal}.

Although partial results on Chen's conjecture were obtained  for low
dimensions and with additional geometric conditions (e.g.
\cite{akutagawa2013}, \cite{luoyong2014}, \cite{Ou-Chen-book2020},
\cite{Ou2016}), Chen's conjecture is widely open.

Since  the most important  biharmonic submanifolds are hypersurfaces
in Euclidean spaces,  Chen's conjecture on  hypersurfaces in
Euclidean spaces  is a basic one and has been investigated by many
mathematicians. Let $M^n$ be a biharmonic hypersurface in the
Euclidean space $\mathbb R^{n+1}$. Chen in 1986 and Jiang
\cite{jiang1987} in 1987 independently made  a pioneering
contribution to prove that a biharmonic surface $M^2$ in $\mathbb
R^3$ is minimal. Dimitri\'{c} \cite{Dimitri1992}   extended this
result to hypersurfaces with at most two distinct principal
curvatures in $\mathbb R^{n+1}$. In 1995,  Hasanis and Vlachos
\cite{HasanisVlachos1995} made an important progress and  settled
the conjecture for the case $n=3$. Defever  \cite{defever1998} in
1998 reproved the conjecture for $n=3$ through  a new tensorial
analysis approach. Later, Chen and Munteanu \cite{chenMunteanu2013}
confirmed Chen's conjecture for $\delta(2)$-ideal and
$\delta(3)$-ideal hypersurfaces in $\mathbb R^{n+1}$. The first
author \cite{fu1} in 2015 showed that Chen's conjecture is true for
hypersurfaces with three distinct principal curvatures in $\mathbb
R^{n+1}$. Recently, Montaldo, Oniciuc and Ratto \cite{montaldo}
confirmed the conjecture for the $G-$invariant hypersurfaces of
cohomogeneity one in $\mathbb R^{n+1}$.  Koiso and Urakawa
\cite{KoisoUrakawa2014} also verified the conjecture for generic
hypersufaces with irreducible principal curvature vector fields in
$\mathbb R^{n+1}$. However, Chen's conjecture remains open until now
for hypersurfaces $M^n$ for $n\geq 4$.

It is  known that every hypersurface is locally a graph. The
famous Bernstein problem for minimal graphs in $\Bbb R^n$ states
that for $n\geq2$, any entire solution $f:\mathbb R^{n}\to \mathbb
R$ of the minimal graph equation
\begin{equation}
\sum_{i,j=1}^n\Big(\delta_{ij}-\frac{f_if_j}{1+|\nabla
f|^2}\Big)f_{ij}=0\nonumber
\end{equation}
is  an affine function.

The Bernstein problem was first investigated by Bernstein in  1915 for
$n=2$, solved by    De Giorgi \cite{DeGiorgi1965} in  1965  for $n=3$ and
Almgren \cite{Almgren1966}  in  1966   for $n=4$. Simons
\cite{Simon-1968} in 1968 solved it for $n\leq7$. Finally,
Bombieri-De Giorgi-Giusti  \cite{Bombieri1969} in 1969 affirmatively
resolved the  Bernstein problem by constructing a counterexample
that the Bernstein problem is not true for $n\geq 8$.

There exists an interesting similarity between the Bernstein problem
and Chen's conjecture in the case of the graph in $\Bbb R^n$. Chen's
conjecture for biharmonic graphs states that for $n\geq2$, any
entire solution $f:\mathbb R^{n}\to \mathbb R$ of the biharmonic
graph equations (c.f. \cite{Ou20121})
\begin{equation}
\begin{cases}
\Delta (\Delta f)=0,\nonumber\\
(\Delta f_k)\Delta f+2\left <\nabla f_k,\nabla\Delta f\right  >=0,
\quad k=1, \ldots n \nonumber
\end{cases}
\end{equation}
is minimal; i.e.  $\Delta f=-\mathrm {div}(\nabla f)=0$.

In contrast to the Bernstein problem, Chen's conjecture holds true
for $n=2$ (Chen \cite{Chen1991}, Jiang 1987 \cite{jiang1987}) and
for $n=3$ (Hasanis-Vlachos, 1995 \cite{HasanisVlachos1995}).
However,  Chen's conjecture for the case $n\geq4$  is unknown even
for the biharmonic graphs. By comparing with the Bernstein problem,
it is very interesting and important to investigate  Chen's
conjecture for the case of $n\geq 4$.

In this paper, we confirm Chen's conjecture  for hypersurfaces in
the case of $n=4$.  More precisely, we prove
\begin{theorem}
Every biharmonic hypersurface in the Euclidean space $\mathbb R^{5}$
is minimal.
\end{theorem}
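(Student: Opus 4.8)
The plan is to reduce the statement to showing that the mean curvature function is constant, and then to a delicate local analysis on the region where it is not. Let $A$ denote the shape operator of $M^4\subset\mathbb R^5$ with respect to a unit normal $\xi$, let $\lambda_1,\dots,\lambda_4$ be its principal curvatures, and let $H=\tfrac14\sum_i\lambda_i$ be the mean curvature. Splitting the biharmonic condition $\Delta\overrightarrow H=0$ into its normal and tangential components yields, with $\Delta=\operatorname{div}\nabla$ the Laplace--Beltrami operator, the two well-known equations
\begin{align}
\Delta H &= -H\,|A|^2, \label{eq:bih1}\\
A(\nabla H) &= -2H\,\nabla H. \label{eq:bih2}
\end{align}
If $H$ is constant then \eqref{eq:bih1} gives $H|A|^2=0$; since $|A|^2\ge 4H^2$ by Cauchy--Schwarz, this forces $H\equiv 0$, i.e. $M$ is minimal. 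Hence it suffices to prove that the open set $U=\{p\in M:\nabla H(p)\ne 0\}$ is empty.

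Suppose, for contradiction, that $U\neq\emptyset$. On $U$, equation \eqref{eq:bih2} shows that $\nabla H$ is an eigenvector of $A$, so one may take $e_1=\nabla H/|\nabla H|$ as the first vector of a local orthonormal frame $e_1,\dots,e_4$ of principal directions, with principal curvature $\lambda_1=-2H$, while $e_1(H)\ne0$ and $e_i(H)=0$ for $i\ge 2$. On the open dense subset of $U$ where the number of distinct principal curvatures is locally constant, I would first dispose of the cases of at most three distinct principal curvatures: by Dimitri\'c \cite{Dimitri1992} for at most two, and by Fu \cite{fu1} for exactly three, any such biharmonic hypersurface is minimal, contradicting $\nabla H\ne 0$ on $U$. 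One is thus reduced to the genuinely new case in which $M$ has four distinct, hence simple, principal curvatures, with $e_1$ the principal direction of $\lambda_1=-2H$ and the trace constraint $\lambda_2+\lambda_3+\lambda_4=6H$.

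The core of the argument is then a computation in this principal frame. Writing $\nabla_{e_i}e_j=\sum_k\Gamma_{ij}^k e_k$, the Codazzi equations give $e_i(\lambda_j)=(\lambda_i-\lambda_j)\Gamma^j_{ji}$ for $i\ne j$, and $(\lambda_j-\lambda_l)\Gamma^l_{ij}=(\lambda_i-\lambda_l)\Gamma^l_{ji}$ for distinct $i,j,l$. Since $e_i(H)=0$ for $i\ge2$ forces $e_i(\lambda_1)=0$, the distinctness of the $\lambda_i$ immediately yields $\nabla_{e_1}e_1=0$ (so the $e_1$-curves are geodesics), the explicit expressions $\Gamma^j_{j1}=e_1(\lambda_j)/(\lambda_1-\lambda_j)$ for $j\ge2$, and the vanishing of several further connection coefficients. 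Feeding these into \eqref{eq:bih1}, rewritten along the $e_1$-flow via $\nabla H=e_1(H)\,e_1$, produces a first relation among $H$, the $\lambda_i$, and their $e_1$-derivatives. I would then bring in the Gauss equation $\langle R(e_i,e_j)e_j,e_i\rangle=\lambda_i\lambda_j$, expanding the left-hand side through the structure equations to obtain, for each pair $(i,j)$, an algebraic--differential identity coupling the products $\lambda_i\lambda_j$ to the connection coefficients and their derivatives.

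The main obstacle --- and what makes $n=4$ substantially harder than the settled cases $n=2,3$ --- is to close this overdetermined system. With four simple principal curvatures, the third type of Codazzi relation (those indexed by three distinct directions) and the six Gauss identities couple all the connection coefficients nonlinearly, and the difficulty lies in finding the right combinations that eliminate them. The strategy is to use the trace constraint $\lambda_2+\lambda_3+\lambda_4=6H$ together with repeated differentiation along $e_1$ to reduce the system, along each $e_1$-geodesic, to a set of polynomial identities in $H$ and $|\nabla H|$. These identities are then shown to be incompatible with $e_1(H)\ne 0$, forcing $\nabla H\equiv 0$ on $U$. This contradicts the definition of $U$, so $U=\emptyset$, $H$ is constant, and therefore $M$ is minimal.
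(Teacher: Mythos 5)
Your setup is correct and matches the paper's: the two scalar biharmonic equations, the reduction to the open set where $\nabla H\neq 0$, the frame with $e_1\parallel\nabla H$ and $\lambda_1=-2H$ of multiplicity one, the disposal of the cases of at most three distinct principal curvatures via Dimitri\'c and Fu, and the trace constraint $\lambda_2+\lambda_3+\lambda_4=6H$. But everything after that is a statement of intent rather than an argument. The sentence ``These identities are then shown to be incompatible with $e_1(H)\neq 0$'' is precisely the theorem; you have not exhibited the identities, nor the elimination that produces them, and it is not at all clear that a direct reduction ``to polynomial identities in $H$ and $|\nabla H|$'' along each $e_1$-geodesic is achievable without substantial intermediate structure. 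The overdetermined system you describe has, a priori, coefficients $\omega_{ii}^1$ and curvatures $\lambda_i$ depending on all four local coordinates, so one cannot simply restrict to an $e_1$-curve and treat everything as a function of one variable.

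The missing mechanism in the paper is twofold. First, since there are only three unknowns $\omega_{22}^1,\omega_{33}^1,\omega_{44}^1$, their power sums $f_k=\sum_{i=2}^4(\omega_{ii}^1)^k$ for $k=1,\dots,5$ satisfy two universal algebraic relations (Newton-type identities, Lemma 3.1); on the other hand the ODE system \eqref{L1}--\eqref{L3} lets one express each $f_k$ in terms of $\lambda_1$ and $T=f_1$ and their $e_1$-derivatives (Lemma 3.2). Substituting produces two genuinely different ODEs coupling $T$ and $\lambda_1$, and a long elimination (Lemma 3.3) shows $T$, hence each $f_k$, depends only on the parameter $t$ along the $e_1$-curves; a Vandermonde argument (Lemma 3.4) then propagates this to all $\omega_{ii}^1$ and all $\lambda_i$. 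Only at that point does the problem become one-dimensional. Second, the closing contradiction is not a single computation but a case split driven by the Gauss-equation relations $\omega_{ii}^1\omega_{jj}^1-2\omega_{ij}^k\omega_{ji}^k=-\lambda_i\lambda_j$: either the off-diagonal coefficients $\omega_{23}^4,\omega_{32}^4,\omega_{43}^2$ are all nonzero, which forces $\omega_{ii}^1=\alpha\lambda_i+\beta$ and leads to a nontrivial polynomial in $\alpha$ with constant coefficients and then to $4\alpha^2+1=0$; or they all vanish, which forces one principal curvature to vanish and ultimately $15k^2+6k+2=0$ for $k=\lambda_1/\lambda_2$. Both are impossible over the reals. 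Without these two ingredients your outline cannot be completed as written, so the proposal has a genuine gap at its core step.
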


 The main approach for the proof of Theorem 1.1 is a
continuation of the program developed in \cite{fu-hong2018}.
However, in \cite{fu-hong2018}, an extra condition of {\em constant
scalar curvature} is assumed. In this paper, we overcome the
difficulty and remove the condition for $n=4$ by  dealing with the
differential equations related to biharmonicity. By transferring the
biharmonic equations into a system of algebraic differential
equations, we developed a new method to determine the behavior of
the principal curvature functions via investigating the solution of
the system of algebraic differential equations. This new approach in
the current paper provides an interesting insight for us to
understand the geometric structure of biharmonic hypersurfaces.

We would like to outline our proofs here. Assume that the mean
curvature $H$ is non-constant. Through  Proposition 2.1, the
immersion $\phi: M^n\rightarrow\mathbb{R}^{n+1}$ of a biharmonic
hypersurface $M^n$ in  $\mathbb R^{n+1}$ satisfies two geometric
equations \eqref{biharmonic condition}. The second one of
\eqref{biharmonic condition} tells us that ${\rm grad}\,H$ is an
eigenvector of the Weingarten operator $A$ with the corresponding
principal curvature $\lambda_1=-2H$. For the non-constant  mean
curvature $H$,  the multiplicity of the principal curvature
$\lambda_1$  is one (see Lemma 2.2). By using the Gauss and Codazzi
equations, biharmonic equations reduce to three differential
equations \eqref{L1}-\eqref{L3} on principal curvature functions
$\lambda_i$. Noticing that the equations in Lemma 2.3 are an
over-determined system of differential equations related to the
principal curvatures and the coefficients of connection, we transfer
the equations \eqref{L1}-\eqref{L3} into five equations concerning
$\sum_{i=2}^4(\omega_{ii}^1)^k$ in terms of $\lambda_1$ and $T$ (see
details in Section 3). In the case of $n=4$, we derive two equations
concerning $T$ and $\lambda_1$ without $\omega_{ii}^1$. Utilizing
the elimination method via differential equations, we show that $T$
is also a smooth function depending only on one variable $t$.
Through relations between $\lambda_i$ and $\omega_{ij}^k$ in Lemmas
4.1 and 4.2, we distinguish $\omega_{ij}^k$ in two cases. For each
case,  we deal with different algebraic equations by a complicated
deduction and get a contradiction finally.

\begin{remark}
It should be pointed out that   Chen's conjecture
for hypersurfaces in the case of $n>4$  is still challenging and the geometric properties
of biharmonic hypersurfaces are far from our understanding and
handling.\end{remark}

The paper is organized as follows. In Section 2, we recall some
necessary background for theory of biharmonic immersions and two
useful lemmas. In Section 3, we derive some key lemmas, which are
crucial to prove the main theorem. In Section 4, we complete a proof
of Theorem 1.1.

\medskip\noindent
{\bf Acknowledgement:} {The authors would like to thank Professors
Bang-Yen Chen and Cezar Oniciuc for their interest and useful
comments. The first author is supported by Liaoning Provincial
Science and Technology Department Project (No.2020-MS-340), and
Liaoning BaiQianWan Talents Program. The partial research of the second author was supported
by the Australian Research Council grant (DP150101275).}

\section{Preliminaries}

A biharmonic map $\phi$  between an $n$-dimensional Riemannian
manifold  $(M^n,g)$ and an $m$-dimensional Riemannian manifold
$(N^m,h)$ is  a critical point of the bienergy functional
\begin{eqnarray*}
E_2(\phi)=\frac{1}{2}\int_M|\tau(\phi)|^2dv_g,
\end{eqnarray*}
where $\tau(\phi)= {\rm trace \nabla d\phi}$ is the tension field of
$\phi$ that vanishes for a harmonic map. The concept of biharmonic
maps was introduced in 1983 by Eells and  Lemaire \cite{Eells} with
the aim to study $k-$harmonic maps. The Euler-Lagrange equation
associated to the bi-energy is stated as
\begin{align*}
\tau_2(\phi)&=-\Delta\tau(\phi)-{\rm trace}\,
R^{N}(d\phi,\tau(\phi))d\phi=0,
\end{align*}
where $\tau_2(\phi)$ is  the bitension field of $\phi$, and $R^{N}$
is the curvature tensor of $N^m$
(e.g.\cite{jiang1986},\cite{jiang1987}). Hence, $\phi$ is called
{\em a biharmonic map} if its bitension field $\tau_2(\phi)$
vanishes identically.

In a special case, an immersion $\phi:(M^n,g)\longrightarrow(N^m,h)$
is biharmonic if and only if its mean curvature vector field
$\overrightarrow{H}$ fulfills the fourth-order semi-linear elliptic
equation \eqref{biharmonic1}. It is well-known that any minimal
immersion is harmonic. The biharmonic immersions are
called proper biharmonic if they are not harmonic.

Let $\phi: M^n\rightarrow\mathbb{R}^{n+1}$ be an isometric immersion
of a hypersurface $M^n$ in the Euclidean space $\mathbb{R}^{n+1}$ and let
   $X$, $Y$, $Z$ be  the  tangent vector fields of
$M^n$. Denote the Levi-Civita connections of $M^n$ and
$\mathbb{R}^{n+1}$ by $\nabla$ and $\tilde\nabla$  respectively. Then the
Gauss and Codazzi equations are written as
\begin{eqnarray*}
R(X,Y)Z=\langle AY,Z\rangle AX-\langle AX,Z\rangle AY,
\end{eqnarray*}
\begin{eqnarray*}
(\nabla_{X} A)Y=(\nabla_{Y} A)X,
\end{eqnarray*}
where $A$ is the Weingarten operator, and $R$ is the curvature
tensor of $M^n$.

We recall
that the mean curvature vector field $\overrightarrow{H}$ can be
defined by
\begin{eqnarray}\label{md}
\overrightarrow{H}=\frac{1}{n}{\rm trace}~h,
\end{eqnarray}
where $h$ is the second fundamental form. For  the mean curvature $H$,
choose $\xi$ to be  the unit normal vector field of $M^n$  satisfying
$\overrightarrow{H}=H\xi$.

The sufficient and necessary conditions
for a hypersurface $M^n$ to be biharmonic are given (see
\cite{CMO2002}, \cite{chenbook2015}), which are the basic
characterizations of a biharmonic hypersurface in $\mathbb R^{n+1}$  in the following:
\begin{proposition}
The immersion $\phi: M^n\rightarrow\mathbb{R}^{n+1}$ of a
hypersurface $M^n$ in the Euclidean space $\mathbb
R^{n+1}$ is biharmonic if and only if $H$ and $A$ satisfy
\begin{equation} \label{biharmonic condition}
\begin{cases}
\Delta H+H {\rm trace}\, A^2 =0,\\
2A\,\nabla H+nH\nabla H=0,
\end{cases}
\end{equation}
\end{proposition}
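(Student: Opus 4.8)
The plan is to reduce the biharmonicity condition \eqref{biharmonic1} to a purely hypersurface-theoretic statement by computing the rough Laplacian of $\overrightarrow{H}$ explicitly. Since the ambient space is flat, the curvature term $\mathrm{trace}\, R^{N}(d\phi,\overrightarrow{H})d\phi$ vanishes, so $\phi$ is biharmonic if and only if $\Delta\overrightarrow{H}=0$, where $\Delta$ is the rough Laplacian on sections of the pullback bundle $\phi^{-1}T\mathbb{R}^{n+1}$ induced by the flat connection $\tilde\nabla$. The strategy is then to write $\overrightarrow{H}=H\xi$, evaluate $\Delta(H\xi)$ at an arbitrary point $p$ using a local orthonormal frame $\{e_i\}$ geodesic at $p$ (so that $\nabla_{e_i}e_j=0$ at $p$), and finally split the resulting vector field into its tangential and normal components.

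First I would apply the Gauss formula $\tilde\nabla_X Y=\nabla_X Y+\langle AX,Y\rangle\xi$ and the Weingarten formula $\tilde\nabla_X\xi=-AX$ to compute, at $p$,
\begin{equation*}
\tilde\nabla_{e_i}(H\xi)=(e_iH)\xi-H\,Ae_i,
\end{equation*}
and then differentiate once more, again using Gauss and Weingarten, to obtain $\tilde\nabla_{e_i}\tilde\nabla_{e_i}(H\xi)$. Summing over $i$ and collecting terms, the normal component will produce $\big(\sum_i e_ie_iH\big)\xi-H(\mathrm{trace}\,A^2)\xi$, while the tangential component will produce $-2A\nabla H$ together with a term $-H\sum_i\nabla_{e_i}(Ae_i)$.

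The key geometric input is then to simplify $\sum_i\nabla_{e_i}(Ae_i)$. Since at $p$ one has $\nabla_{e_i}(Ae_i)=(\nabla_{e_i}A)e_i$, I would invoke the Codazzi equation $(\nabla_X A)Y=(\nabla_Y A)X$ together with the symmetry of $A$ to show that $\sum_i(\nabla_{e_i}A)e_i=n\nabla H$; concretely, pairing with $e_j$ and using Codazzi converts the sum into $e_j(\mathrm{trace}\,A)=n\,e_jH$. Substituting this back and matching the sign convention for $\Delta$ acting on functions, the normal part of $\Delta\overrightarrow{H}=0$ becomes $\Delta H+H\,\mathrm{trace}\,A^2=0$ and the tangential part becomes $2A\nabla H+nH\nabla H=0$, which are exactly the two equations of \eqref{biharmonic condition}. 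The converse follows by reading the same computation backwards, since the normal and tangential equations recombine into $\Delta\overrightarrow{H}=0$.

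I expect the only delicate points to be bookkeeping rather than conceptual: first, correctly tracking the sign convention that makes the rough Laplacian on $\overrightarrow{H}$ compatible with the Laplacian $\Delta H$ appearing in \eqref{biharmonic condition}; and second, the careful application of Codazzi to identify $\sum_i(\nabla_{e_i}A)e_i$ with $n\nabla H$, which is the single nontrivial identity driving the whole decomposition.
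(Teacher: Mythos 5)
Your proposal is correct and is exactly the standard derivation: in flat ambient space the curvature term of \eqref{biharmonic1} drops, and splitting $\sum_i\tilde\nabla_{e_i}\tilde\nabla_{e_i}(H\xi)$ into normal and tangential parts via the Gauss and Weingarten formulas, with the Codazzi identity $\sum_i(\nabla_{e_i}A)e_i=n\nabla H$, yields the two equations of \eqref{biharmonic condition}. The paper itself supplies no proof of this proposition but simply cites \cite{CMO2002} and \cite{chenbook2015}, where the argument is precisely the computation you outline, so there is nothing to add beyond confirming that your sign bookkeeping is consistent with the convention $\Delta f=-\sum_i(e_ie_i-\nabla_{e_i}e_i)f$ stated after the proposition.
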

\noindent where the Laplacian operator $\Delta$ applied on a function $f$ is
given by
\begin{eqnarray*}
\Delta f=-{\rm div}(\nabla
f)=-\sum_{i=1}^n\langle\nabla_{e_i}(\nabla f), e_i
\rangle=-\sum_{i=1}^n(e_ie_i-\nabla_{e_i}e_i)f.
\end{eqnarray*}

 We collect two results from \cite{fu-hong2018} concerning
principal curvatures for biharmonic hypersurfaces, whose proofs are
standard (see also \cite{KoisoUrakawa2014}).
\begin{lemma}{\rm([20])}
Let $M^n$ be an orientable biharmonic hypersurface with non-constant
mean curvature in $\mathbb R^{n+1}$  and  assume the mean curvature $H$
is non-constant. Then the multiplicity of the principal curvature
$\lambda_1$ $(=-nH/2)$ is one, i.e. $\lambda_j\neq\lambda_1$ for
$2\leq j\leq n$.
\end{lemma}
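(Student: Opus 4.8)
The plan is to work with the second biharmonic equation $2A\,\nabla H+nH\nabla H=0$, which states that $\nabla H$ is a principal direction. Assuming $H$ is non-constant, the open set where $\nabla H\neq 0$ is nonempty, and on this set $\nabla H$ is an eigenvector of the Weingarten operator $A$ with eigenvalue $\lambda_1=-nH/2$. So immediately one principal curvature equals $-nH/2$ along $\mathrm{grad}\,H$, and I choose a local orthonormal frame $\{e_1,\dots,e_n\}$ of principal directions with $e_1=\nabla H/|\nabla H|$, so that $Ae_i=\lambda_i e_i$ and $\lambda_1=-nH/2$.

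My first step would be to argue by contradiction: suppose that at some point of the open set where $\nabla H\neq 0$ the multiplicity of $\lambda_1$ exceeds one, i.e.\ there exists $j\in\{2,\dots,n\}$ with $\lambda_j=\lambda_1$. By continuity this persists on an open set, and I would derive a contradiction there. The key identity to exploit is that $e_1$ is the (normalized) gradient direction of $H$, hence also of $\lambda_1=-nH/2$, so $e_i\lambda_1=0$ for all $i\geq 2$ (the function $\lambda_1$ has gradient parallel to $e_1$). This is the crucial rigidity built into $\lambda_1$ that the other principal curvatures need not share.

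Next I would bring in the Codazzi equation $(\nabla_X A)Y=(\nabla_Y A)X$ expressed in this frame. Writing the connection coefficients $\omega_{ij}^k=\langle\nabla_{e_k}e_i,e_j\rangle$, the Codazzi equations in a principal frame take the standard form: for distinct $i,j$, $(\lambda_i-\lambda_j)\omega_{ij}^j = e_i\lambda_j$ (after symmetrizing), together with the compatibility relation $(\lambda_i-\lambda_k)\omega_{ik}^j=(\lambda_j-\lambda_k)\omega_{jk}^i$ for distinct indices. Applying these with $i=1$ and the eigenvalue constraint $e_i\lambda_1=0$ for $i\geq 2$, one reads off that $(\lambda_1-\lambda_j)\omega_{1j}^1=e_j\lambda_1=0$; on the set where $\lambda_j\neq\lambda_1$ this forces $\omega_{1j}^1=0$, but on the hypothetical set where $\lambda_j=\lambda_1$ the relevant Codazzi relations instead constrain the $\omega$'s in a way incompatible with $e_1\lambda_1\neq 0$ (which holds since $\nabla H\neq 0$). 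Tracking the index $j$ for which $\lambda_j=\lambda_1$ through the compatibility equations produces the contradiction, showing no such $j$ exists.

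I expect the main obstacle to be bookkeeping the Codazzi relations carefully enough to isolate the contradiction: the statement $e_i\lambda_1=0$ for $i\geq 2$ together with $e_1\lambda_1\neq 0$ must be played against the relation governing $\omega_{1j}^k$ when $\lambda_j=\lambda_1$, and one has to verify that the degenerate pair genuinely forces inconsistency rather than merely vanishing of some connection coefficients. A clean way to package this is to show directly that if $\lambda_j=\lambda_1$ for some $j\geq 2$, then differentiating this equality in the $e_1$ direction and using the Codazzi-derived expressions for $e_1\lambda_j$ yields an equation that cannot hold when $e_1\lambda_1=-\tfrac{n}{2}e_1 H\neq 0$; since the proofs are noted to be standard (referencing \cite{KoisoUrakawa2014} and \cite{fu-hong2018}), I would keep the computation brief and simply invoke the Codazzi machinery to close the argument.
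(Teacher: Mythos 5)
Your proposal is correct and follows essentially the same route as the standard proof the paper cites for this lemma (the paper itself gives no proof, only the reference to [20]): the decisive step is exactly the one in your final paragraph, namely that on an open set where $\lambda_j=\lambda_1$ the Codazzi relation $e_1(\lambda_j)=(\lambda_1-\lambda_j)\omega_{j1}^j=0$ contradicts $e_1(\lambda_j)=e_1(\lambda_1)=-\tfrac{n}{2}\,e_1(H)\neq 0$. The intermediate manipulation in your third paragraph with $(\lambda_1-\lambda_j)\omega_{1j}^1=e_j\lambda_1$ is vacuous (both sides vanish identically, and $\omega_{1j}^1=0$ already follows from metric compatibility), so it should be dropped in favor of the closing argument, which is complete as stated.
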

Using the Gauss and Codazzi equations, biharmonic equations
\eqref{biharmonic condition} can be summarized into a system of $2n-1$
differential equations as follows:
\begin{lemma} {\rm([20],[25])} Assume that $H$
is non-constant. Then the smooth real-valued principal curvature
functions $\lambda_i$ and the coefficients of connection
$\omega_{ii}^1$ $(i=2,\ldots, n)$ satisfy the following
differential equations
\begin{align}
&e_1e_1(\lambda_1)=e_1(\lambda_1)\Big(\sum_{i=2}^n\omega_{ii}^1\Big)+\lambda_1S,\label{L1}\\
&e_1(\lambda_i)=\lambda_i\omega_{ii}^1-\lambda_1\omega_{ii}^1,\label{L2}\\
&e_1(\omega_{ii}^1)=(\omega_{ii}^1)^2+\lambda_1\lambda_i,\label{L3}
\end{align}
where $\lambda_1=-nH/2$, $e_1={\nabla}H/|{\nabla} H|$  and $S$ is
the squared length of the second fundamental form $h$ of $M$.
\end{lemma}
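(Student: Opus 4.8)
The plan is to pass from the two biharmonic equations of Proposition 2.1 to a principal frame adapted to $\nabla H$, after which \eqref{L1}--\eqref{L3} fall out of the Gauss and Codazzi equations. First I would read off from the second equation in \eqref{biharmonic condition} that $\nabla H$ is an eigenvector of $A$ with eigenvalue $\lambda_1=-nH/2$; setting $e_1=\nabla H/|\nabla H|$ and using Lemma 2.2, I extend $e_1$ to a local orthonormal frame $\{e_1,\dots,e_n\}$ of principal directions with $Ae_i=\lambda_ie_i$. Since $\nabla H$ is parallel to $e_1$, each $e_i$ with $i\ge2$ is tangent to the level hypersurfaces of $H$, so $e_i(H)=e_i(\lambda_1)=0$ for $i\ge2$. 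Writing $\omega_{ij}^k=\langle\nabla_{e_i}e_j,e_k\rangle$ and using the antisymmetry $\omega_{ij}^k=-\omega_{ik}^j$, all subsequent identities become relations among the $\lambda_i$ and the $\omega_{ij}^k$.

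The second step extracts the first-order relations from Codazzi. Expanding $(\nabla_{e_i}A)e_1=(\nabla_{e_1}A)e_i$ and using $e_i(\lambda_1)=0$, the $e_1$-component gives $\omega_{1i}^1(\lambda_i-\lambda_1)=0$, whence $\omega_{1i}^1=0$ because $\lambda_i\neq\lambda_1$; equivalently $\nabla_{e_1}e_1=0$, so the integral curves of $e_1$ are geodesics. The $e_i$-component of the same identity reads $e_1(\lambda_i)=(\lambda_i-\lambda_1)\omega_{ii}^1$, which is precisely \eqref{L2}. For \eqref{L1} I would rewrite the first equation in \eqref{biharmonic condition}, using $H=-2\lambda_1/n$ and ${\rm trace}\,A^2=S$, as $\Delta\lambda_1+\lambda_1S=0$. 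Computing $\Delta\lambda_1$ from the displayed expression $\Delta f=-\sum_i(e_ie_i-\nabla_{e_i}e_i)f$ and inserting $e_i(\lambda_1)=0$ for $i\ge2$, $\omega_{11}^1=0$, and $\nabla_{e_1}e_1=0$, only the $e_1$ term and the $\nabla_{e_i}e_i$ terms survive, yielding $\Delta\lambda_1=-e_1e_1(\lambda_1)+\big(\sum_{i=2}^n\omega_{ii}^1\big)e_1(\lambda_1)$; substituting into $\Delta\lambda_1=-\lambda_1S$ gives \eqref{L1}.

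The last and most delicate step is \eqref{L3}, which I would derive from the Gauss equation in the form $K(e_1,e_i)=\langle R(e_1,e_i)e_i,e_1\rangle=\lambda_1\lambda_i$. Expanding the left-hand side through the connection coefficients and simplifying with $\omega_{1i}^1=\omega_{11}^m=0$ leaves $e_1(\omega_{ii}^1)-(\omega_{ii}^1)^2$ together with a sum of off-diagonal terms, each of which carries a factor $\omega_{im}^1$ or $\omega_{mi}^1$ with $m\neq i$ and $m\ge2$. The main obstacle is to arrange that these factors vanish, which forces a careful frame choice. Here I would use that the distribution $e_1^{\perp}=\ker dH$ is integrable, being tangent to the level hypersurfaces of $H$; hence its second fundamental form $\omega_{ij}^1=\langle\nabla_{e_i}e_j,e_1\rangle$ is symmetric in $i,j\ge2$, and together with the Codazzi relation $\omega_{ij}^1(\lambda_j-\lambda_1)=\omega_{ji}^1(\lambda_i-\lambda_1)$ this forces $\omega_{ij}^1=0$ whenever $\lambda_i\neq\lambda_j$, while inside a repeated-eigenvalue block I may diagonalize directly. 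With $\omega_{ij}^1=\delta_{ij}\omega_{ii}^1$ the off-diagonal sum vanishes and the Gauss equation collapses to $e_1(\omega_{ii}^1)=(\omega_{ii}^1)^2+\lambda_1\lambda_i$, which is \eqref{L3}. Counting, \eqref{L1}--\eqref{L3} comprise $1+(n-1)+(n-1)=2n-1$ equations, matching the statement.
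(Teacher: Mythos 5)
Your derivation is correct: the adapted frame with $e_1=\nabla H/|\nabla H|$, the Codazzi computation giving $\nabla_{e_1}e_1=0$ and \eqref{L2}, the Laplacian of $\lambda_1$ combined with the first equation of \eqref{biharmonic condition} giving \eqref{L1}, and the Gauss equation for $\langle R(e_1,e_i)e_i,e_1\rangle$ after showing $\omega_{ij}^1=\delta_{ij}\omega_{ii}^1$ (via integrability of $\ker dH$ plus Codazzi) giving \eqref{L3}. The paper itself offers no proof of this lemma, quoting it from [20] and [25], and your argument is essentially the standard derivation found there, so there is nothing to fault.
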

\section{Some key lemmas}
From now on, we study the biharmonicity of a hypersurface $M^n$ in a
Euclidean space $\mathbb R^{n+1}$ with $n=4$. Since $M^4$ with at
most three distinct principal curvatures everywhere in a Euclidean
space $\mathbb R^5$ is minimal (see \cite{fu1}), we work only on the
case that the connected component of $M_A$ has different principal
curvatures. In general, the set $M_A$ of all points of $M^4$, at
which the number of distinct eigenvalues of the Weingarten operator
$A$ (i.e. the principal curvatures) is locally a constant, is open
and dense in $M^4$. Meanwhile, on each connected component, the
principal curvature functions of $A$ are always smooth. Assume that,
on a component, the mean curvature $H$ is non-constant. Then there
exists  a neighborhood $U_p$ of $p$ such that $\nabla H \neq0$.

It follows from \eqref{biharmonic condition} that $\nabla H$ is an
eigenvector of the Weingarten operator $A$ with the corresponding
principal curvature $-2H$. Choosing locally $e_1$ such that $e_1$ is
parallel to $\nabla H$, and with respect to some suitable
orthonormal frame $\{e_1,e_2,e_3,e_4\}$, the Weingarten operator $A$
of $M$ is given by
\begin{eqnarray}\label{A}
A=\mathrm{diag}(\lambda_1,\lambda_2, \lambda_3, \lambda_4),\nonumber
\end{eqnarray}
where $\lambda_i$ are the principal curvatures and
$\lambda_1=-2H$. Therefore, it follows from \eqref{md} that
$\sum_{i=1}^4\lambda_i=4H$, and hence
\begin{eqnarray}\label{sum1}
\lambda_2+\lambda_3+\lambda_4=-3\lambda_1.
\end{eqnarray}
Denote by $S$ the squared length of the second fundamental form $h$
of $M$. It follows from \eqref{A} that $S$ is given by
\begin{eqnarray}\label{sum2}
S={\rm trace}\, A^2
=\sum_{i=1}^4\lambda^2_i=\sum_{i=2}^4\lambda^2_i+\lambda^2_1.
\end{eqnarray}
As $\nabla H=\sum_{i=1}^4e_i(H)e_i$ and $e_1$ is parallel to the
direction of $\nabla H$, we have that
\begin{eqnarray*}
e_1(H)\neq0,\quad e_i(H)=0, \quad 2\leq i\leq 4
\end{eqnarray*}
and hence
\begin{eqnarray} \label{H1}
e_1(\lambda_1)\neq0,\quad e_i(\lambda_1)=0, \quad 2\leq i\leq
4.\label{A2}
\end{eqnarray}
Setting $ \nabla_{e_i}e_j=\sum_{k=1}^4\omega_{ij}^ke_k$ $(1\leq
i,j\leq 4)$, a direct computation concerning the compatibility
conditions $\nabla_{e_k}\langle e_i,e_i\rangle=0$ and
$\nabla_{e_k}\langle e_i,e_j\rangle=0$ $(i\neq j)$ yields
respectively that
\begin{eqnarray}
\omega_{ki}^i=0,\quad \omega_{ki}^j+\omega_{kj}^i=0,\quad i\neq
j.\nonumber
\end{eqnarray}
It follows from using the Codazzi equation  that
\begin{eqnarray}
e_i(\lambda_j)=(\lambda_i-\lambda_j)\omega_{ji}^j,\nonumber\\
(\lambda_i-\lambda_j)\omega_{ki}^j=(\lambda_k-\lambda_j)\omega_{ik}^j\nonumber
\end{eqnarray}
for distinct $i, j, k$.

Due to \eqref{H1}, we consider an integral curve of $e_1$
passing through $p=\gamma(t_0)$ as $\gamma(t)$, $t\in I$. It is easy
to show that there exists a local chart $(U; t=x^1,x^2, x^3, x^4)$
around $p$, such that $\lambda_1(t, x^2, x^3, x^4)=\lambda_1(t)$ on
the whole neighborhood of $p$.

Set
$f_k=(\omega_{22}^1)^k+(\omega_{33}^1)^k+(\omega_{44}^1)^k,~~\mathrm
{for}~ k=1,\ldots, 5$. In the following, an interesting system of
algebraic equations will be derived.
\begin{lemma} \label{lemma3.1}
With the notions $f_k$, the following two relations hold
\begin{align}
    &f_1^4-6f_1^2f_2+3f_2^2+8f_1f_3-6f_4=0, \label{L4}\\
    &f_1^5-5f_1^3f_2+5f_1^2f_3+5f_2f_3-6f_5=0.\label{L5}
\end{align}
\end{lemma}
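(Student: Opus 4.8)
The plan is to observe that Lemma \ref{lemma3.1} is a purely algebraic statement about power sums and carries essentially no geometric content. Writing $a=\omega_{22}^1$, $b=\omega_{33}^1$, $c=\omega_{44}^1$ for brevity, the quantities $f_k=a^k+b^k+c^k$ are exactly the power sums of the three real numbers $a,b,c$. Since every symmetric polynomial in three variables is a polynomial in the elementary symmetric functions $\sigma_1=a+b+c$, $\sigma_2=ab+bc+ca$, $\sigma_3=abc$, the higher power sums $f_4$ and $f_5$ are forced to be polynomials in $f_1,f_2,f_3$; the two displayed relations \eqref{L4} and \eqref{L5} are precisely those expressions rewritten with the $\sigma_j$ eliminated.

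Concretely, I would use that $a,b,c$ are the roots of the monic cubic $x^3-\sigma_1 x^2+\sigma_2 x-\sigma_3=0$. Multiplying this identity by $x^{k}$, evaluating at $x=a,b,c$, and summing produces the recurrence $f_{k+3}=\sigma_1 f_{k+2}-\sigma_2 f_{k+1}+\sigma_3 f_k$; in particular $f_4=\sigma_1 f_3-\sigma_2 f_2+\sigma_3 f_1$ and $f_5=\sigma_1 f_4-\sigma_2 f_3+\sigma_3 f_2$. I would then invert the first three Newton identities to express $\sigma_1=f_1$, $\sigma_2=\tfrac12(f_1^2-f_2)$, and $\sigma_3=\tfrac16(f_1^3-3f_1 f_2+2f_3)$.

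Substituting these formulas for $\sigma_1,\sigma_2,\sigma_3$ into $f_4=\sigma_1 f_3-\sigma_2 f_2+\sigma_3 f_1$ and collecting terms gives $6f_4=f_1^4-6f_1^2 f_2+3f_2^2+8f_1 f_3$, which is exactly \eqref{L4}; feeding the resulting formula for $f_4$ together with the same expressions for $\sigma_2,\sigma_3$ into $f_5=\sigma_1 f_4-\sigma_2 f_3+\sigma_3 f_2$ and simplifying yields $6f_5=f_1^5-5f_1^3 f_2+5f_1^2 f_3+5f_2 f_3$, which is \eqref{L5}. Both computations are elementary polynomial bookkeeping in three variables.

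The only point requiring care is the coefficient arithmetic in the last substitution, in particular the exact cancellation of the $f_1 f_2^2$ term when one assembles \eqref{L5}. There is, however, no genuine obstacle here: the entire lemma is just Newton's identities specialized to three quantities, and it does not invoke the differential equations \eqref{L1}--\eqref{L3} at all. The geometric equations enter only later, when these algebraic identities will be combined with \eqref{L1}--\eqref{L3} to constrain the principal curvatures.
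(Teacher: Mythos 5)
Your proof is correct, and it reaches the two identities by a recognizably cleaner mechanism than the paper's. You are right that the lemma is a universal fact about power sums of three quantities and uses nothing from \eqref{L1}--\eqref{L3}; your final substitutions check out, including the cancellation of the $f_1f_2^2$ term in \eqref{L5} (its coefficient is $\tfrac12-\tfrac12=0$). The paper proves the same identities by direct expansion rather than by quoting Newton's identities: it computes $(f_1^2-f_2)^2-2(f_2^2-f_4)=8f_1\,\omega_{22}^1\omega_{33}^1\omega_{44}^1$ and $f_1^3-f_3=3f_1f_2-3f_3+6\,\omega_{22}^1\omega_{33}^1\omega_{44}^1$ and eliminates the product $\omega_{22}^1\omega_{33}^1\omega_{44}^1$ (your $\sigma_3$) between them to get \eqref{L4}; for \eqref{L5} it expands $f_1f_4$ to isolate $f_5$ together with $\sigma_3$, eliminates $\sigma_3$ the same way, and then eliminates $f_4$ using \eqref{L4}. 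So the paper is implicitly carrying out your elimination of the elementary symmetric functions by hand, without naming the recurrence $f_{k+3}=\sigma_1 f_{k+2}-\sigma_2 f_{k+1}+\sigma_3 f_k$ or the inversion $\sigma_2=\tfrac12(f_1^2-f_2)$, $\sigma_3=\tfrac16(f_1^3-3f_1f_2+2f_3)$. Your route buys transparency and immediate generalization: it mechanically produces the analogous polynomial relation for every $f_k$ with $k>3$, and the corresponding identities for any number of principal curvatures, whereas the paper's targeted eliminations gain nothing here beyond avoiding the explicit formulas for the $\sigma_j$.
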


\begin{proof}
It is easy to check that
\begin{equation}\label{P1}
    f_1^2-f_2=\big(\sum_{i=2}^4\omega_{ii}^1\big)^2-\sum_{i=2}^4(\omega_{ii}^1)^2\\
=2(\omega_{22}^1\omega_{33}^1+\omega_{22}^1\omega_{44}^1+\omega_{33}^1\omega_{44}^1)
\end{equation}
and
\begin{equation}\label{P2}
    f_2^2-f_4=\big(\sum_{i=2}^4\omega_{ii}^2\big)^2-\sum_{i=2}^4(\omega_{ii}^1)^4\\
=2\big\{(\omega_{22}^1\omega_{33}^1)^2+(\omega_{22}^1\omega_{44}^1)^2+(\omega_{33}^1\omega_{44}^1)^2\big\}.
\end{equation}
Combining \eqref{P1} with \eqref{P2} gives
\begin{align}\label{P3}
&\quad (f_1^2-f_2)^2-2(f_2^2-f_4)\\
&=4(\omega_{22}^1\omega_{33}^1+\omega_{22}^1\omega_{44}^1+\omega_{33}^1\omega_{44}^1)^2\nonumber\\
&\quad -4\big\{(\omega_{22}^1\omega_{33}^1)^2+(\omega_{22}^1\omega_{44}^1)^2+(\omega_{33}^1\omega_{44}^1)^2\big\}\nonumber\\
&=8\big\{(\omega_{22}^1)^2\omega_{33}^1\omega_{44}^1+\omega_{22}^1(\omega_{33}^1)^2\omega_{44}^1+\omega_{22}^1\omega_{33}^1(\omega_{44}^1)^2\big\}\nonumber\\
&=8f_1\omega_{22}^1\omega_{33}^1\omega_{44}^1.\nonumber
\end{align}
Similarly, we have
\begin{align}\label{P4}
f_1^3-f_3&=(\omega_{22}^1+\omega_{33}^1+\omega_{44}^1)^3
 -\big\{(\omega_{22}^1)^3+(\omega_{33}^1)^3+(\omega_{44}^1)^3\big\}\\
&=3\big\{(\omega_{22}^1)^2\omega_{33}^1+(\omega_{22}^1)^2\omega_{44}^1+(\omega_{33}^1)^2\omega_{22}^1+(\omega_{33}^1)^2\omega_{44}^1\nonumber\\
&\quad +(\omega_{44}^1)^2\omega_{22}^1+(\omega_{44}^1)^2\omega_{33}^1\big\}+6\omega_{22}^1\omega_{33}^1\omega_{44}^1\nonumber\\
&=3\sum_{i=2}^4(\omega_{ii}^1)^2(f_1-\omega_{ii}^1)+6\omega_{22}^1\omega_{33}^1\omega_{44}^1\nonumber\\
&=3f_1f_2-3f_3+6\omega_{22}^1\omega_{33}^1\omega_{44}^1\nonumber.
\end{align}
Eliminating $\omega_{22}^1\omega_{33}^1\omega_{44}^1$ from
\eqref{P3} and \eqref{P4}, we get \eqref{L4}.

 A direct computation
shows that
\begin{align*}\label{P6}
    f_1f_4&=\Big(\sum_{i=2}^4\omega_{ii}^1\Big)\Big(\sum_{i=2}^4(\omega_{ii}^1)^4\Big)\\
    &=\sum_{i=2}^4(\omega_{ii}^1)^5+\omega_{22}^1\Big\{(\omega_{33}^1)^4+(\omega_{44}^1)^4\Big\}\\
    &\quad +\omega_{33}^1\Big\{(\omega_{22}^1)^4+(\omega_{44}^1)^4\Big\}+\omega_{44}^1\Big\{(\omega_{22}^1)^4+(\omega_{33}^1)^4\Big\}\\
    &=f_5+\omega_{22}^1\left\{\big[(\omega_{33}^1)^2+(\omega_{44}^1)^2\big]^2-2(\omega_{33}^1)^2(\omega_{44}^1)^2\right\}\\
    &\quad +\omega_{33}^1\left\{\big[(\omega_{22}^1)^2+(\omega_{44}^1)^2\big]^2-2(\omega_{22}^1)^2(\omega_{44}^1)^2\right\}\\
    &\quad +\omega_{44}^1\left\{\big[(\omega_{22}^1)^2+(\omega_{33}^1)^2\big]^2-2(\omega_{22}^1)^2(\omega_{33}^1)^2\right\}\\
    &=f_5+\sum_{i=2}^4\omega_{ii}^1\Big(f_2-(\omega_{ii}^1)^2\Big)^2
    -2\omega_{22}^1\omega_{33}^1\omega_{44}^1\big(\omega_{22}^1\omega_{33}^1+\omega_{22}^1\omega_{44}^1+
    \omega_{33}^1\omega_{44}^1\big),
\end{align*}
which together with \eqref{P1} yields
\begin{equation}\label{P6}
    f_1f_4=2f_5+f_1f_2^2-2f_2f_3-(f_1^2-f_2)\omega_{22}^1\omega_{33}^1\omega_{44}^1.
\end{equation}
Eliminating $\omega_{22}^1\omega_{33}^1\omega_{44}^1$ from
\eqref{P4} and \eqref{P6} again, one gets
\begin{align}\label{P7}
6f_1f_4=12f_5+6f_1f_2^2-12f_2f_3-(f_1^2-f_2)(f_1^3-3f_1f_2+2f_3).
\end{align}
Moreover, eliminating the terms of $f_4$ from \eqref{L4} and
\eqref{P7} gives \eqref{L5}.
\end{proof}

For simplicity, we write  $\lambda=\lambda_1(t)$, $f_1=T$,
$T'=e_1(T)$, $T''=e_1e_1(T)$, $T'''=e_1e_1e_1(T)$ and
$T''''=e_1e_1e_1e_1(T)$. Then  the functions $f_1, \cdots, f_5$ are
expressed in the terms of $\lambda$ and $T$ in the following:
\begin{lemma} \label{lemma3.2}
 $f_1,~f_2,~f_3,~f_4,~~\mathrm {and}~f_5$ can be
written as
\begin{align}\label{L6}
\begin{cases}
f_1=T,\\
f_2=T'+3\lambda^2,\\
f_3=\frac{1}{2}T''-\lambda^2T+6\lambda\lambda',\\
f_4=\frac{1}{6}T'''-\frac{4}{3}\lambda^2T'-\frac{5}{3}\lambda\lambda'T+2\lambda'^2+4\lambda\lambda''-2\lambda^4,\\
f_5=\frac{1}{24}T''''-\frac{5}{6}\lambda^2T''-\frac{25}{12}\lambda\lambda'T'-
    \frac{1}{12}(13\lambda\lambda''+\lambda'^2-12\lambda^4)T\\
    \quad\quad+2\lambda\lambda'''+\frac{5}{3}\lambda'\lambda''-\frac{26}{3}\lambda^3\lambda'.
\end{cases}
\end{align}
\end{lemma}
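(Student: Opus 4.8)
The plan is to treat the five power sums $f_k=\sum_{i=2}^{4}(\omega_{ii}^1)^k$ as the top layer of a small hierarchy of elementary sums and to climb that hierarchy by repeated $e_1$-differentiation, using only the three structure equations \eqref{L1}--\eqref{L3} together with the trace identity \eqref{sum1}. The identity $f_1=T$ is the definition. I would introduce the weighted sums $g_k=\sum_{i=2}^{4}\lambda_i(\omega_{ii}^1)^k$; differentiating $f_k$ and substituting $e_1(\omega_{ii}^1)=(\omega_{ii}^1)^2+\lambda\lambda_i$ from \eqref{L3} yields the master recursion
\begin{equation*}
e_1(f_k)=k\,f_{k+1}+k\lambda\,g_{k-1},\qquad\text{equivalently}\qquad f_{k+1}=\tfrac1k\,e_1(f_k)-\lambda\,g_{k-1}.
\end{equation*}
Thus each $f_{k+1}$ is fixed once $e_1(f_k)$ and $g_{k-1}$ are known, and the whole lemma reduces to computing $g_0,g_1,g_2,g_3$.

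The first two are immediate. By \eqref{sum1}, $g_0=\sum_{i=2}^4\lambda_i=-3\lambda$, which already delivers $f_2=T'+3\lambda^2$. Differentiating $g_0$ and using $e_1(\lambda_i)=(\lambda_i-\lambda)\omega_{ii}^1$ from \eqref{L2} gives $-3\lambda'=e_1(g_0)=g_1-\lambda T$, hence $g_1=\lambda T-3\lambda'$; substituting into the master recursion produces $f_3=\tfrac12 T''-\lambda^2 T+6\lambda\lambda'$.

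The difficulty appears at the next level. Differentiating $g_k$ with \eqref{L2} and \eqref{L3} gives the subsidiary recursion
\begin{equation*}
e_1(g_k)=(k+1)g_{k+1}-\lambda\,f_{k+1}+k\lambda\,h_{k-1},\qquad h_k:=\sum_{i=2}^{4}\lambda_i^2(\omega_{ii}^1)^k,
\end{equation*}
so the $g$-recursion does not close on itself: it drags in the doubly weighted sums $h_{k-1}$, and for $g_2$ this is $h_0=\sum_{i=2}^4\lambda_i^2=S-\lambda^2$, involving the a priori unknown $S=\mathrm{trace}\,A^2$. The key step is to eliminate $S$ through \eqref{L1}, which for $n=4$ reads $\lambda''=\lambda' T+\lambda S$, i.e. $\lambda S=\lambda''-\lambda' T$. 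Feeding this into the computation of $g_2$ yields $f_4$. For $f_5$ one needs in addition $g_3$, hence $h_1$; here the identity $h_1=\tfrac12\,e_1(h_0)+\lambda g_1$, obtained from $\sum\lambda_i e_1(\lambda_i)=\tfrac12\,e_1\!\big(\sum\lambda_i^2\big)$ together with \eqref{L2}, reduces matters to a single further $e_1$-derivative of the relation $\lambda S=\lambda''-\lambda' T$. Assembling $g_2$ and $g_3$ and substituting into the master recursion then produces the stated expressions for $f_4$ and $f_5$.

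I expect the only genuine obstacle to be bookkeeping rather than concept. Because $S$ is recovered from $\lambda S=\lambda''-\lambda' T$, it formally enters through a factor $1/\lambda$, and likewise its derivative carries a $\lambda'S$ term; one must check that all such $1/\lambda$ contributions cancel identically, so that $f_4$ and $f_5$ come out polynomial in $\lambda,\lambda',\lambda'',\lambda''',T,T',T'',T'''$ as claimed. I would organize the calculation so that $S$ occurs only in the combinations $\lambda S$ and $\lambda'S+\lambda S'=\lambda'''-\lambda''T-\lambda'T'$ dictated by \eqref{L1} and its $e_1$-derivative; this renders the cancellations structural rather than accidental and keeps the final substitution into the master recursion purely mechanical.
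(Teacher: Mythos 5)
Your proposal is correct and follows essentially the same route as the paper: your master recursion $f_{k+1}=\tfrac1k e_1(f_k)-\lambda g_{k-1}$ is exactly the paper's step of multiplying \eqref{L3} by $(\omega_{ii}^1)^{k-1}$ and summing, your auxiliary sums $g_1,g_2,g_3$ and $h_1$ are the paper's $g_1,g_2,g_4$ and $g_3$, and the elimination of $S$ via $\lambda S=\lambda''-\lambda' T$ (which is \eqref{L1} with $T:=f_1$) is likewise identical. The only cosmetic difference is that the paper substitutes $S=(\lambda''-\lambda' T)/\lambda$ directly and lets the resulting $1/\lambda$ factors cancel in the final assembly of $f_4$ and $f_5$, whereas you keep $S$ packaged in the combinations $\lambda S$ and $(\lambda S)'$ so that the cancellation is manifest.
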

\begin{proof}
Since $e_1(\lambda)\neq0$, $\lambda$ is not constant. From
\eqref{L1}, one has
\begin{align}\label{Q1}
f_1=\frac{e_1e_1(\lambda)-\lambda
S}{e_1(\lambda)}=\frac{\lambda''}{\lambda'}-\frac{\lambda}{\lambda'}S=:T.
\end{align}
Taking the sum of $i$ from 2 to 4 in \eqref{L3}-\eqref{L2}
respectively ang using \eqref{sum1}, we have
\begin{align}
f_2=&3\lambda^2+e_1(f_1)=T'+3\lambda^2,\label{Q2}\\
g_1:=&\sum_{i=2}^4 \lambda_i\omega_{ii}^1\label{Q3}\\
=&\lambda T-3e_1(\lambda)=\lambda T-3\lambda'.\nonumber
\end{align}
Multiplying by $\omega_{ii}^1$ both sides of equation \eqref{L3}, we
have
\begin{align*}
\frac{1}{2}e_1\big((\omega_{ii}^1)^2\big)&=(\omega_{ii}^1)^3+\lambda
\lambda_i\omega_{ii}^1.
\end{align*}
Taking the sum of $i$ in the above equation gives
\begin{align}
f_3=\frac 12 e_1(f_2)-\lambda  g_1\label{Q4}=\frac{1}{2}T''-\lambda^2T+6\lambda\lambda'.
\end{align}
Differentiating \eqref{Q3} along $e_1$, using \eqref{L2} and
\eqref{L3} we have
\begin{eqnarray}\label{Q5}
e_1(g_1)=2\sum_{i=2}^4\lambda_i\big(\omega_{ii}^1\big)^2+ \lambda
\sum_{i=2}^4\lambda_i^2-\lambda
\sum_{i=2}^4\big(\omega_{ii}^1\big)^2.
\end{eqnarray}
Hence, it follows from \eqref{sum1}, \eqref{sum2} and \eqref{Q1}
that
\begin{align}
g_2:=\sum_{i=2}^4\lambda_i\big(\omega_{ii}^1\big)^2
=&\frac{1}{2}\big\{e_1(g_1)-
\lambda \big(S-\lambda ^2\big)+\lambda f_2\big\}\nonumber\\
=&\frac{1}{2}\big\{e_1(g_1)-\lambda''+\lambda'T+\lambda^3+\lambda
f_2\big\}\nonumber.
\end{align}
Using \eqref{Q2} and \eqref{Q3}, the above expression reduces to
\begin{align}\label{Q6}
g_2=\lambda T'+\lambda'T-2\lambda''+2\lambda^3.
\end{align}
Multiplying $(\omega_{ii}^1)^2$ on both sides of equation
\eqref{L3}, we have
\begin{align*}
\frac{1}{3}e_1\big((\omega_{ii}^1)^3\big)&=(\omega_{ii}^1)^4+\lambda
\lambda_i(\omega_{ii}^1)^2.
\end{align*}
Taking the sum of  $i$ from 2 to 4 in the above equation, we obtain
\begin{align}\label{Q7}
f_4=&\frac{1}{3}e_1(f_3)-\lambda  g_2\\
=&\frac{1}{6}T'''-\frac{4}{3}\lambda^2T'-\frac{5}{3}\lambda\lambda'T+2\lambda'^2+4\lambda\lambda''-2\lambda^4.\nonumber
\end{align}
 Multiplying equation \eqref{L2} by $\lambda_i$
gives
\begin{align*}
\lambda_i^2\omega_{ii}^1=\frac{1}{2}e_1(\lambda_i^2)+\lambda
\lambda_i\omega_{ii}^1,
\end{align*}
which together with \eqref{sum2} yields
\begin{align}\label{Q8}
    g_3:&=\sum_{i=2}^4\lambda_i^2\omega_{ii}^1=\frac 12e_1(S-\lambda ^2)+\lambda g_1\\
    &=\frac 12\Big(\frac{\lambda''-\lambda'T}{\lambda}-\lambda ^2\Big)'+\lambda g_1\nonumber\\
    &=-\frac{\lambda'}{2\lambda} T'+\Big(\lambda^2-\frac{\lambda''\lambda-\lambda'^2}{2\lambda^2}\Big)T-
    4\lambda\lambda'+\frac{\lambda'''\lambda-\lambda''\lambda'}{2\lambda^2}\nonumber.
\end{align}
Differentiating \eqref{Q6} with respect to $e_1$ and using
\eqref{L2} and \eqref{L3}, we have
\begin{align}
e_1(g_2)=3\sum_{i=2}^4\lambda_i\big(\omega_{ii}^1\big)^3-\lambda
\sum_{i=2}^4\big(\omega_{ii}^1\big)^3+2\lambda
\sum_{i=2}^4\lambda_i^2\omega_{ii}^1,\nonumber
\end{align}
which leads to
\begin{align}\label{Q10}
    g_4:=&\sum_{i=2}^4\lambda_i\big(\omega_{ii}^1\big)^3\\
    =&\frac{1}{3}\big(e_1(g_2)+\lambda f_3-2\lambda g_3\big)\nonumber\\
    =&\frac 1 2 \lambda T''+\lambda'
    T'+\frac{1}{3}(2\lambda''-3\lambda^3-\frac{\lambda'^2}{\lambda})T\nonumber\\
    &-\lambda'''+\frac{20}{3}\lambda^2\lambda'+\frac{\lambda''\lambda'}{3\lambda}.\nonumber
\end{align}
Multiplying $(\omega_{ii}^1)^3$ on both sides of equation
\eqref{L3}, we have
\begin{align*}
\frac{1}{4}e_1\big((\omega_{ii}^1)^4\big)&=(\omega_{ii}^1)^5+\lambda
\lambda_i(\omega_{ii}^1)^3.
\end{align*}
After taking the sum of  $i$ in the above equation, we have
\begin{align}\label{Q11}
    f_5=&\frac{1}{4}e_1(f_4)-\lambda  g_4\\
    =&\frac{1}{24}T''''-\frac{5}{6}\lambda^2T''-\frac{25}{12}\lambda\lambda'T'-
    \frac{1}{12}(13\lambda\lambda''+\lambda'^2-12\lambda^4)T\nonumber\\
    &+2\lambda\lambda'''+\frac{5}{3}\lambda'\lambda''-\frac{26}{3}\lambda^3\lambda'.\nonumber
\end{align}
\end{proof}

\begin{lemma}\label{lemma3.3}
Let $M^4$ be an orientable biharmonic hypersurface with simple
distinct principal curvatures in $\mathbb R^5$.  Then the function
$T$ depends only on the variable $t$.
\end{lemma}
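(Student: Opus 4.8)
The plan is to turn the two algebraic identities \eqref{L4}--\eqref{L5} into two \emph{differential} equations for the single unknown $T$, and then to eliminate all the $e_1$-derivatives of $T$ between them until only a derivative-free relation between $T$ and $\lambda$ survives. Since $\lambda=\lambda_1(t)$ and all of its $e_1$-derivatives depend only on $t$, such a relation will exhibit $T$ as a root of a polynomial whose coefficients are functions of $t$ alone, which is precisely the desired conclusion.

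First I would substitute the expressions \eqref{L6} for $f_2,\dots,f_5$ in terms of $T,\lambda$ and their $e_1$-derivatives into \eqref{L4} and \eqref{L5}. Because \eqref{L6} writes $f_k$ with $e_1$-derivatives of $T$ up to order $k-1$, the identity \eqref{L4} becomes a third-order equation in $T$ that is linear in $T'''$, of the schematic form
\begin{equation}
T'''=4TT''+3(T')^2-6T^2T'+T^4+a_1T'+a_2T^2+a_3T+a_4,\nonumber
\end{equation}
where $a_1,\dots,a_4$ are polynomials in $\lambda,\lambda',\lambda''$; in the same way \eqref{L5} becomes a fourth-order equation that is linear in $T''''$. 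These are the ``two equations concerning $T$ and $\lambda_1$ without $\omega_{ii}^1$'' referred to in the outline.

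Next comes the elimination. From the third-order equation I solve for $T'''$ and differentiate once along $e_1$ to express $T''''$ in terms of $T,T',T''$ and functions of $t$. Substituting this into the fourth-order equation from \eqref{L5}, and using the solved form of $T'''$ once more, removes $T''''$ and $T'''$ and yields a \emph{second-order} relation $\Psi(T,T',T'';t)=0$. I then repeat the device: differentiating $\Psi=0$ and re-expressing the resulting $T'''$ gives a second second-order relation, and eliminating $T''$ between the two (for instance by taking the resultant in the top derivative) produces a first-order relation; one further differentiation-and-elimination step removes $T'$. The outcome is a single algebraic equation $G(T,t)=0$, where $G$ is a polynomial in $T$ whose coefficients are universal polynomials in $\lambda,\lambda',\lambda'',\dots$, and hence functions of $t$ only.

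Provided $G(\,\cdot\,,t)$ is a non-trivial polynomial, the conclusion is immediate: for each fixed $t$ the value $T(t,x^2,x^3,x^4)$ lies in the finite root set of $G(\,\cdot\,,t)$, so by smoothness $T$ is locally constant in the transverse variables $x^2,x^3,x^4$ and therefore depends only on $t$. I expect the main obstacle to be twofold and essentially computational-structural. First, each elimination step divides by the coefficient of the top derivative, so I must track these leading coefficients and analyze separately the degenerate loci where they vanish; such loci impose special differential relations on $\lambda$ and must be ruled out or handled on their own using that $\lambda$ is non-constant. Second, I must verify that the final $G$ is not identically zero, i.e.\ that \eqref{L4} and \eqref{L5} are differentially independent as equations for $T$, since a vanishing $G$ would leave $T$ undetermined by this argument. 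Confirming $G\not\equiv0$ and organizing the bookkeeping of the long but routine symbolic manipulations is where the genuine effort lies.
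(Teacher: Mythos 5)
Your proposal follows essentially the same route as the paper: substitute \eqref{L6} into \eqref{L4}--\eqref{L5} to get a third-order and a fourth-order equation in $T$ with $t$-dependent coefficients, then successively differentiate and eliminate $T''''$, $T'''$, $T''$, $T'$ (tracking the degenerate loci where leading coefficients such as the paper's $a_1$ vanish, which are ruled out using that $\lambda$ is non-constant) until a non-trivial polynomial relation in $T$ with coefficients depending only on $t$ remains --- in the paper this ends at the linear relation $b_1T+b_2=0$. The approach and the two difficulties you flag are exactly those the paper confronts and resolves by explicit computation.
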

\begin{proof}
In the case $T=0$ in a region, it follows from \eqref{Q1} that $S$ depends only
on $t$. In the following, we assume that $T\neq0$ .

Substituting \eqref{L6} into \eqref{L4} and \eqref{L5} yields
\begin{align}
    &- T'''+ 4 T T''+ 3 T'^2+(-6 T^2 + 26 \lambda^2) T' +(T^4- 26 \lambda^2 T^2   \label{Equation1}\\
    &+ 58\lambda \lambda' T)+ 39 \lambda^4 - 24 \lambda \lambda'' - 12 \lambda'^2=0,\nonumber\\
   & -T''''  +10 T' T'' +(10 T^2  + 50 \lambda^2) T''- (20 T^3 +20 \lambda^2 T \label{Equation2} \\
   & - 170 \lambda \lambda') T' + (4 T^5  - 80\lambda^2 T^3   + 120\lambda \lambda' T^2
    - 84\lambda^4 T  +26 \lambda \lambda'' T \nonumber\\
   &+2\lambda'^2 T)    + 568\lambda^3 \lambda' - 48\lambda \lambda''' -40\lambda' \lambda''=0.\nonumber
\end{align}
Differentiating \eqref{Equation1} with respect to $e_1$, we have
\begin{align}
   & -T'''' +4TT''' +10 T' T'' +(-6T^2  + 26 \lambda^2) T''-12TT'^2\label{Equation3} \\
   &+ (4 T^3 -52 \lambda^2 T+110 \lambda \lambda') T'+ (-52\lambda \lambda' T^2\nonumber\\
   &+58\lambda \lambda'' T+58\lambda'^2T)+156\lambda^3\lambda'-48\lambda'\lambda'' - 24\lambda \lambda'''=0.\nonumber
\end{align}
Eliminating the terms on $T''''$ in \eqref{Equation2}-\eqref{Equation3}, we get
\begin{align}
   & 4TT''' -(16T^2 + 24 \lambda^2) T''-12TT'^2+ (24 T^3 -32 \lambda^2 T-60 \lambda \lambda') T'\label{Equation4}
   \\ &+(-4T^5+80\lambda^2T^3-172\lambda \lambda' T^2+84\lambda^4T+32\lambda \lambda'' T+56\lambda'^2T)\nonumber\\
   &-412\lambda^3\lambda'-8\lambda'\lambda'' +24\lambda \lambda'''=0.\nonumber
\end{align}
Moreover, we can eliminate the terms on $T'''$ of
\eqref{Equation1} and \eqref{Equation4}. Then we obtain
\begin{align}
   &-6\lambda^2T''+ (18\lambda^2 T-15 \lambda \lambda') T'\label{Equation5}
   \\ &+(-6\lambda^2T^3+15\lambda \lambda' T^2+60\lambda^4T-16\lambda \lambda'' T+2\lambda'^2T)\nonumber\\
   &-103\lambda^3\lambda'-2\lambda'\lambda'' +6\lambda \lambda'''=0.\nonumber
\end{align}
Differentiating the above equation along $e_1$, one sees
\begin{align}
   &-6\lambda^2T'''+ (18\lambda^2 T-27 \lambda \lambda') T''+18\lambda^2T'^2\label{Equation6}
   \\ &+(-18\lambda^2T^2+66\lambda \lambda' T+60\lambda^4-13\lambda'^2-31\lambda \lambda'')T'\nonumber\\
   &+(-12\lambda\lambda'T^3+15\lambda'^2T^2+15\lambda\lambda''T^2+240\lambda^3\lambda'T\nonumber\\
   &-12\lambda'\lambda''T-16\lambda\lambda'''T)-309\lambda^2\lambda'^2-103\lambda^3\lambda''\nonumber\\
   &-2\lambda''^2+4\lambda'\lambda''' +6\lambda
\lambda''''=0.\nonumber
\end{align}
Note that both equations \eqref{Equation1} and \eqref{Equation4}
have a non-zero term of $T^4$, but \eqref{Equation6} does not
involve any term of $T^4$. Therefore, we conclude that
\eqref{Equation6} are entirely different from equations
\eqref{Equation1} and \eqref{Equation4}, which are third-order
differential equations with respect to $T$.

Next, we   eliminate the
terms of $T'''$, $T''$, $T'$  and derive a non-trivial
equation of $T$.

 \noindent To eliminate the terms of $T'''$ from
\eqref{Equation1} and \eqref{Equation6}, we have
\begin{align}
   &(6\lambda^2 T+27 \lambda \lambda') T''+(-18\lambda^2T^2-66\lambda \lambda' T\label{Equation7}\\
   &+96\lambda^4+13\lambda'^2+31\lambda \lambda'')T'+(6\lambda^2T^4+12\lambda\lambda'T^3\nonumber\\
   &-156\lambda^4T^2-15\lambda'^2T^2-15\lambda\lambda''T^2+108\lambda^3\lambda'T\nonumber\\
   &+12\lambda'\lambda''T+16\lambda\lambda'''T)+234\lambda^6+237\lambda^2\lambda'^2-41\lambda^3\lambda''\nonumber\\
   &+2\lambda''^2-4\lambda'\lambda''' -6\lambda\lambda''''=0.\nonumber
\end{align}
Note that equations \eqref{Equation7} and \eqref{Equation5} are
entirely different. Then, multiplying $2\lambda T+9\lambda'$ to \eqref{Equation5}  and
$2\lambda$ to
\eqref{Equation7}, we eliminate the terms of $T''$ to
obtain
 \begin{align}
a_1T'-a_1T^2+a_2T+a_3=0,\label{Equation8}
\end{align}
where
\begin{align*}
a_1=&62\lambda^2 \lambda''-109\lambda \lambda'^2+192\lambda^5,\\
a_2=&44\lambda^2 \lambda'''- 124\lambda \lambda'
\lambda''+550\lambda^4 \lambda'+18\lambda'^3,\\
a_3=&- 12 \lambda^2 \lambda'''' + 46 \lambda \lambda' \lambda''' - 82 \lambda^4 \lambda''+ 4 \lambda \lambda''^2   \nonumber\\
      &- 18 \lambda'^2 \lambda''- 453 \lambda^3 \lambda'^2+468 \lambda^7.
\end{align*}
If $a_1=0$ in a region, then  \eqref{Equation8} becomes an equation of $T$
\begin{equation}
a_2T+a_3=0.\nonumber
\end{equation}
If $a_2\neq0$, we have $a_3\neq0$ as well since $T\neq0$. Hence the
conclusion follows obviously. If $a_2=0$, then the above equation
yields that $a_3=0$. We will derive a contradiction. Let us consider
the equations  $a_1=0$ and $a_2=0$:
\begin{align}
&62\lambda \lambda''-109\lambda'^2+192\lambda^4=0,\label{D1}\\
&44\lambda^2 \lambda'''- 124\lambda \lambda' \lambda''+550\lambda^4
\lambda'+18\lambda'^3=0.\label{D2}
\end{align}
It is easy to see the two equations \eqref{D1} and \eqref{D2} are
entirely different. Differentiating \eqref{D1} gives
\begin{align}
31\lambda
\lambda'''-78\lambda'\lambda''+384\lambda^3\lambda'=0,\nonumber
\end{align}
which together with \eqref{D2} reduces to
\begin{align}
-206\lambda \lambda''+279\lambda'^2+77\lambda^4=0.\label{D3}
\end{align}
After eliminating the terms of $\lambda''$ between \eqref{D1} and
\eqref{D3}, one has
\begin{align}\label{R1}
-2578\lambda'^2+22163\lambda^4=0.
\end{align}
Differentiating the above equation leads to
\begin{align}\label{R2}
-1289\lambda''+22163\lambda^3=0.
\end{align}
Substituting \eqref{R1} and \eqref{R2} into \eqref{D1}, we find
\begin{align}\
1066545669\lambda^4=0,\nonumber
\end{align}
 which is a contradiction since
$\lambda$ is non-constant. Hence we have  $a_1\neq0$.

\noindent Differentiating \eqref{Equation8} along $e_1$ again, we
deduce
\begin{align}
      & \big(192\lambda^5  - 109\lambda \lambda'^2   + 62\lambda^2 \lambda''\big) T''
       - \big(384\lambda^5 T   + 124\lambda^2 \lambda'' T   \label{Eq9}\\
       &- 218\lambda \lambda'^2 T  -1510\lambda^4 \lambda'
       -106\lambda^2 \lambda'''   +218\lambda \lambda' \lambda'' + 91\lambda'^3\big) T'  \nonumber\\
    &-\big(960\lambda^4 \lambda' + 62\lambda^2 \lambda''' -94\lambda \lambda' \lambda''
     - 109\lambda'^3\big) T^2  + \big(550\lambda^4 \lambda''  \nonumber\\
     & + 2200\lambda^3 \lambda'^2   + 44\lambda^2 \lambda''''
     - 36\lambda \lambda' \lambda'''  - 124\lambda \lambda''^2  - 70\lambda'^2 \lambda'' \big)T  \nonumber\\
     &+ 3276 \lambda^6 \lambda' - 1359 \lambda^2 \lambda'^3 - 82 \lambda^4 \lambda''' - 1234 \lambda^3 \lambda' \lambda''
     - 12 \lambda^2 \lambda''''' \nonumber\\
     &+ 22 \lambda \lambda' \lambda''''
     + 54 \lambda \lambda'' \lambda''' + 28 \lambda'^2 \lambda''' - 32 \lambda' \lambda''^2=0.\nonumber
\end{align}
Multiplying $192\lambda^4 - 109\lambda'^2 + 62\lambda \lambda'' $
and $6\lambda$ on the both sides of equations \eqref{Equation5} and
\eqref{Eq9} respectively, we thus get
\begin{align}
    & \big(1152 T \lambda^6 + 372 T  \lambda^3 \lambda''
    - 654 T \lambda^2 \lambda'^2 + 6180 \lambda^5 \lambda' \label{Eq10}\\
    &+ 636 \lambda^3 \lambda'''  - 2238  \lambda^2 \lambda' \lambda''
      + 1089  \lambda \lambda'^3\big)T' -\big(1152 \lambda^6 \nonumber\\
     &+372\lambda^3 \lambda'' - 654\lambda^2 \lambda'^2\big)T^3
    - \big(2880 \lambda^5 \lambda' +372 \lambda^3 \lambda''' \nonumber\\
     &-1494  \lambda^2 \lambda' \lambda''+ 981 \lambda
     \lambda'^3\big)T^2
      + \big(11520 \lambda^8 + 3948 \lambda^5 \lambda''\nonumber\\
     & + 7044 \lambda^4 \lambda'^2 + 264 \lambda^3 \lambda''''  - 216 \lambda^2 \lambda' \lambda'''
      - 1736 \lambda^2 \lambda''^2 \nonumber\\
      &+ 1448  \lambda \lambda'^2 \lambda'' - 218 \lambda'^4\big)T
      - 120 \lambda^7 \lambda'  + 660 \lambda^5 \lambda'''\nonumber\\
      &   - 14174 \lambda^4 \lambda' \lambda''+3073 \lambda^3 \lambda'^3 - 72 \lambda^3 \lambda''''' + 132 \lambda^2 \lambda' \lambda'''' \nonumber\\
      &+ 696 \lambda^2 \lambda'' \lambda'''
      - 486 \lambda \lambda'^2 \lambda''' - 316 \lambda \lambda' \lambda''^2 + 218 \lambda'^3 \lambda''=0.\nonumber
\end{align}
Since \eqref{Eq10} has a non-zero term of $T^3$, \eqref{Eq10} is
different from \eqref{Equation8}. Combining \eqref{Equation8} with
\eqref{Eq10}, we obtain a non-trivial polynomial equation concerning
$T$ with the coefficients depending only on the variable $t$
\begin{equation}\label{Eq11}
    b_1 T+b_2=0,
\end{equation}
where
\begin{align*}
    b_1=&209088  \lambda^{12} + 174078  \lambda^9 \lambda''
    - 309288  \lambda^8 \lambda'^2 + 8064  \lambda^7 \lambda'''' \\
    &- 89523  \lambda^6 \lambda' \lambda''' - 7830  \lambda^6 \lambda''^2
    + 302157  \lambda^5 \lambda'^2 \lambda'' - 227013  \lambda^4 \lambda'^4 \\
    &+ 2604  \lambda^4 \lambda'' \lambda'''' - 3498  \lambda^4 \lambda'''^2
    - 4578  \lambda^3 \lambda'^2 \lambda'''' + 18354  \lambda^3 \lambda' \lambda'' \lambda'''\\
    & - 13640  \lambda^3 \lambda''^3 - 717  \lambda^2 \lambda'^3 \lambda'''
    + 1350  \lambda^2 \lambda'^2 \lambda''^2 - 975  \lambda \lambda'^4 \lambda'' + 520  \lambda'^6,\\
    b_2=&- 364410 \lambda^{11} \lambda' - 21366 \lambda^9 \lambda'''
    - 146838 \lambda^8 \lambda' \lambda'' + 361623 \lambda^7 \lambda'^3 \\
    &- 1728 \lambda^7 \lambda''''' + 12438 \lambda^6 \lambda' \lambda''''
    + 28338 \lambda^6 \lambda'' \lambda''' - 20178 \lambda^5 \lambda'^2 \lambda'''\\
    & - 143462 \lambda^5 \lambda' \lambda''^2 + 120509 \lambda^4 \lambda'^3 \lambda''
     - 558 \lambda^4 \lambda'' \lambda''''' + 954 \lambda^4 \lambda''' \lambda''''\\
    & + 19795 \lambda^3 \lambda'^5 + 981 \lambda^3 \lambda'^2 \lambda'''''
    - 2334 \lambda^3 \lambda' \lambda'' \lambda'''' - 3657 \lambda^3 \lambda' \lambda'''^2 \\
    &+ 5076 \lambda^3 \lambda''^2 \lambda''' - 165 \lambda^2 \lambda'^3 \lambda''''
     + 1050 \lambda^2 \lambda'^2 \lambda'' \lambda''' - 1330 \lambda^2 \lambda' \lambda''^3\\
    & + 360 \lambda \lambda'^4 \lambda''' + 415 \lambda \lambda'^3 \lambda''^2 - 520 \lambda'^5 \lambda''.
\end{align*}
If $b_1=0$ and $b_2=0$ in a region, similarly, after eliminating the terms of
$\lambda'''''$, $\lambda''''$, $\lambda'''$, $\lambda''$, $\lambda'$
item by item, one gets a non-trivial polynomial equation of
$\lambda$ with constant coefficients, which implies that $\lambda$
is a constant. This is a contradiction.
Therefore, we conclude that $T$ depends only on the variable $t$.
\end{proof}

\begin{lemma}\label{lemma3.4}
Let $M^4$ be an orientable biharmonic hypersurface with non-constant
mean curvature in $\mathbb R^5$. Then $e_i(\lambda_j)=0$ for $2\leq
i, j\leq 4$, that is, all principal curvatures $\lambda_i$ depend
only on the variable $t$.
\end{lemma}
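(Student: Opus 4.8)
The plan is to exploit the fact, now established, that $\lambda=\lambda_1$ (by the choice of chart) and $T=f_1$ (by Lemma~\ref{lemma3.3}) depend only on $t$. Through the formulas of Lemma~\ref{lemma3.2} this propagates at once to the quantities that matter: the first three power sums $f_1,f_2,f_3$ of $\omega_{22}^1,\omega_{33}^1,\omega_{44}^1$, the mixed sums $g_1=\sum_{i=2}^4\lambda_i\omega_{ii}^1$ and $g_2=\sum_{i=2}^4\lambda_i(\omega_{ii}^1)^2$ from \eqref{Q3} and \eqref{Q6}, and the symmetric functions $\sum_{i=2}^4\lambda_i=-3\lambda$ and $\sum_{i=2}^4\lambda_i^2=S-\lambda^2$, all depend only on $t$. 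The strategy is to recover the three pairs $(\lambda_i,\omega_{ii}^1)$, $i=2,3,4$, from these data and conclude that each $\lambda_i$ is a function of $t$ alone. The final reduction is cheap: since $e_i(\lambda_1)=0$ for $i\geq2$ by \eqref{H1} and $\lambda'=e_1(\lambda)\neq0$, we get $e_i(t)=0$ for $i\geq2$, whence $e_i(\lambda_j)=\lambda_j'(t)\,e_i(t)=0$ once $\lambda_j$ is known to depend only on $t$.

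First I would pin down the connection coefficients. As $f_1,f_2,f_3$ are the first three power sums of $\omega_{22}^1,\omega_{33}^1,\omega_{44}^1$, Newton's identities express the elementary symmetric functions of these three numbers through $f_1,f_2,f_3$; hence the unordered triple $\{\omega_{22}^1,\omega_{33}^1,\omega_{44}^1\}$ is the root set of a monic cubic whose coefficients depend only on $t$. I then claim that the three roots are pairwise distinct on a dense open set. Indeed, if $\omega_{ii}^1\equiv\omega_{jj}^1$ on some open set, then subtracting the two instances of \eqref{L3} gives $\lambda(\lambda_i-\lambda_j)=0$ there; since $\lambda'\neq0$ makes $\lambda$ strictly monotone and hence nonzero off a single slice $\{t=t_\ast\}$, this forces $\lambda_i=\lambda_j$, contradicting the distinctness of the principal curvatures. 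Thus the closed set where two of the $\omega_{ii}^1$ coincide has empty interior, and on its dense open complement each $\omega_{ii}^1$ is a continuous selection of a root of a cubic with distinct roots and $t$-dependent coefficients, so each $\omega_{ii}^1$ depends only on $t$.

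On this dense open set the three relations
\[
\sum_{i=2}^4\lambda_i=-3\lambda,\qquad
\sum_{i=2}^4\lambda_i\,\omega_{ii}^1=g_1,\qquad
\sum_{i=2}^4\lambda_i\,(\omega_{ii}^1)^2=g_2
\]
form a linear system for $(\lambda_2,\lambda_3,\lambda_4)$ whose coefficient matrix is the Vandermonde matrix built from the distinct nodes $\omega_{22}^1,\omega_{33}^1,\omega_{44}^1$. This matrix is invertible and depends only on $t$, as does the right-hand side, so each $\lambda_i$ depends only on $t$ on the dense open set, and then everywhere by continuity. Combined with the final reduction above, this yields $e_i(\lambda_j)=0$ for $2\leq i,j\leq4$.

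The step I expect to be the main obstacle is precisely the control of the degenerate locus where two (or all three) of the $\omega_{ii}^1$ coincide and the Vandermonde system degenerates. My argument confines this locus to a nowhere dense set by means of \eqref{L3} and the simplicity of the principal curvatures, after which density and continuity finish the job. Should one wish to argue on the degenerate set directly rather than by continuity, the extra symmetric function $\sum_{i=2}^4\lambda_i^2=S-\lambda^2$ separates any coincident pair $\lambda_i,\lambda_j$ (they are the two distinct roots of a quadratic with $t$-dependent coefficients), so the coincidence of connection coefficients never actually obstructs the conclusion; the remaining verifications are routine.
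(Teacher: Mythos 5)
Your proof is correct. The first half --- showing that each $\omega_{ii}^1$ depends only on $t$ --- is essentially the paper's argument in different clothing: the paper differentiates $f_k=\sum_{i}(\omega_{ii}^1)^k$, $k=1,2,3$, along $e_i$ and inverts the resulting homogeneous Vandermonde system by Cramer's rule, while you pass through Newton's identities to present the $\omega_{ii}^1$ as the roots of a cubic with $t$-dependent coefficients; both rest on the same nonvanishing Vandermonde determinant, and your explicit justification of the pairwise distinctness of the $\omega_{ii}^1$ via \eqref{L3} is a welcome addition (the paper only asserts this). Where you genuinely diverge is the final step. The paper gets $e_i(\lambda_j)=0$ by first killing $e_ie_1(\omega_{jj}^1)$ through the commutator $[e_i,e_1]$ and then differentiating \eqref{L3} along $e_i$ to isolate $\lambda_1 e_i(\lambda_j)$. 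You instead solve a second, inhomogeneous Vandermonde system for $(\lambda_2,\lambda_3,\lambda_4)$ built from the moments $\sum_{i=2}^4\lambda_i=-3\lambda_1$, $g_1$ and $g_2$ of \eqref{Q3} and \eqref{Q6}, whose right-hand sides are functions of $t$ alone by Lemma \ref{lemma3.3} together with the formulas inside the proof of Lemma \ref{lemma3.2}. Your route avoids the commutator computation at the cost of importing the auxiliary quantities $g_1,g_2$; an even shorter variant of your ending would read $\lambda_j$ off directly from \eqref{L3} as $\lambda_j=\bigl(e_1(\omega_{jj}^1)-(\omega_{jj}^1)^2\bigr)/\lambda_1$ on the dense set where $\lambda_1\neq0$. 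Either way, the closing reduction $e_i(t)=0$ from \eqref{H1} and the conclusion are sound.
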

\begin{proof}
 If  the number $m$ of distinct principal curvatures
is 3 or 2, it has been proved in \cite{fu1}, so we only need  to
consider the case of four distinct principal curvatures. Since
$\lambda_i\neq \lambda_j$ at any point in $U_p$, it follows from
\eqref{L2} and \eqref{L3} that there exists a neighborhood $V\subset
U_p$ such that $\omega_{ii}^1\neq \omega_{jj}^1$ for $i\neq j$.

According to Lemmas 3.3, \eqref{L6} implies that $f_k$ for $k=1,
\ldots, 5$ depend only on the variable $t$, that is, $e_i(f_k)=0$
for $2\leq i\leq 4$. Hence, differentiating both sides of equations
$f_k=\sum_{i=2}^4(\omega_{ii}^1)^k$ for $k=1, 2, 3$ with respect to
$e_i$ $(2\leq i\leq 4)$, we obtain
\begin{align}\label{Vandermonde}
\begin{cases}
e_i(\omega_{22}^1)+e_i(\omega_{33}^1)+e_i(\omega_{44}^1)=0,\\
\omega_{22}^1 e_i(\omega_{22}^1)+\omega_{33}^1 e_i(\omega_{33}^1)+\omega_{44}^1 e_i(\omega_{44}^1)=0,\\
(\omega_{22}^1)^2 e_i(\omega_{22}^1)+(\omega_{33}^1)^2 e_i(\omega_{33}^1)+(\omega_{44}^1)^2 e_i(\omega_{44}^1)=0.\\
\end{cases}
\end{align}
Since $\omega_{22}^1$, $\omega_{33}^1$, $\omega_{44}^1$ are mutually
different at $V$ and the determinant of the coefficient matrix of
\eqref{Vandermonde} is the Vandermonde determinant with order 3, it
follows that
\begin{eqnarray*}
\left | \begin{array}{ccccc} 1&1&1\\
\omega_{22}^1&\omega_{33}^1&\omega_{44}^1\\
(\omega_{22}^1)^2&(\omega_{33}^1)^2&(\omega_{44}^1)^2\
\end{array} \right |=(\omega_{44}^1-\omega_{33}^1)(\omega_{44}^1-\omega_{22}^1)(\omega_{33}^1-\omega_{22}^1)\neq 0.
\end{eqnarray*}
According to Cramer's rule in linear algebra, one gets
\[e_i(\omega_{22}^1)= e_i(\omega_{33}^1)=e_i(\omega_{44}^1)=0.\]
Furthermore, for $j=2, 3, 4$ by considering
\begin{align}
e_ie_1(\omega_{jj}^1)-e_1e_i(\omega_{jj}^1)=[e_i,
e_1](\omega_{jj}^1)=\sum_{l=2}^4(\omega_{i1}^l-\omega_{1i}^l)e_l(\omega_{jj}^1),\nonumber
\end{align}
we get
\begin{align}
e_ie_1(\omega_{jj}^1)=0.\nonumber
\end{align}
Differentiating \eqref{L3} with respect to $e_i$, taking into
account the above equation and  $e_i(\omega_{jj}^1)=0$, we derive
$$e_i(\lambda_j)=0$$
for any $1\leq j\leq 4$ and $2\leq i\leq 4$. Therefore, we complete
a proof of Lemma 3.4.
\end{proof}
\begin{remark}
Note that our method developed in Lemmas
\ref{lemma3.2}-\ref{lemma3.4} works also for $n=2$ and $n=3$, which
were proved with different approaches in \cite{jiang1987},
\cite{HasanisVlachos1995}, or \cite{defever1998}.
\end{remark}

\section{Proof of the main theorem}
We first recall some relations concerning the coefficients of
connection and principal curvature functions verified in
\cite{fu-hong2018} or \cite{fu3}. Actually, they are the four
dimension version of Lemmas 3.5 and 3.6 in \cite{fu-hong2018} while
Lemma \ref{lemma3.4} holds.
\begin{lemma}{\rm([20])}\label{lemma4.1}
For three distinct principal curvatures $\lambda_i$, $\lambda_j$ and
$\lambda_k$ $(2\leq i, j, k\leq 4)$, we have the following
relations:
\begin{align}
&\omega_{23}^4(\lambda_3-\lambda_4)=\omega_{32}^4(\lambda_2-\lambda_4)=\omega_{43}^2(\lambda_3-\lambda_2),\label{F1}\\
&\omega_{23}^4\omega_{32}^4+\omega_{34}^2\omega_{43}^2+\omega_{24}^3\omega_{42}^3=0,\label{F2}\\
&\omega_{23}^4(\omega_{33}^1-\omega_{44}^1)=\omega_{32}^4(\omega_{22}^1-\omega_{44}^1)=
\omega_{43}^2(\omega_{33}^1-\omega_{22}^1). \label{F3}
\end{align}
\end{lemma}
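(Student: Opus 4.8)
The plan is to derive all three identities directly from the Gauss and Codazzi equations, specialised to the orthonormal frame $\{e_1,e_2,e_3,e_4\}$ diagonalising $A$, using Lemma \ref{lemma3.4} as the only extra input. First I would record the structural simplifications of the connection coefficients. Since Lemma \ref{lemma3.4} gives $e_i(\lambda_j)=0$ for $2\le i,j\le 4$, the first Codazzi equation $e_i(\lambda_j)=(\lambda_i-\lambda_j)\omega_{ji}^j$ together with the distinctness of the principal curvatures forces $\omega_{ji}^j=0$, hence $\omega_{jj}^i=0$, for distinct $i,j\in\{2,3,4\}$; in particular $\nabla_{e_j}e_j=\omega_{jj}^1e_1$. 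The crucial observation is that $e_1$ is parallel to $\nabla H$, so the orthogonal distribution $\ker dH=\mathrm{span}\{e_2,e_3,e_4\}$ is integrable (as $dH$ is closed). Integrability yields $\langle[e_i,e_j],e_1\rangle=\omega_{ij}^1-\omega_{ji}^1=0$ for $i,j\ge 2$, and combining $\omega_{ij}^1=\omega_{ji}^1$ with the Codazzi relation $(\lambda_j-\lambda_1)\omega_{ij}^1=(\lambda_i-\lambda_1)\omega_{ji}^1$ gives $(\lambda_i-\lambda_j)\omega_{ij}^1=0$, so $\omega_{ij}^1=0$ for all distinct $i,j\in\{2,3,4\}$. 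This last vanishing is the linchpin of the whole argument.

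For \eqref{F1}, I would apply the second Codazzi equation $(\lambda_i-\lambda_j)\omega_{ki}^j=(\lambda_k-\lambda_j)\omega_{ik}^j$ twice. The choice $(i,j,k)=(2,4,3)$ gives $\omega_{23}^4(\lambda_3-\lambda_4)=\omega_{32}^4(\lambda_2-\lambda_4)$, and the choice $(i,j,k)=(3,2,4)$, after rewriting $\omega_{34}^2=-\omega_{32}^4$ via the antisymmetry $\omega_{ki}^j+\omega_{kj}^i=0$, gives $\omega_{43}^2(\lambda_3-\lambda_2)=\omega_{32}^4(\lambda_2-\lambda_4)$. Chaining the two equalities yields \eqref{F1}.

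Identity \eqref{F2} then falls out as a purely algebraic consequence of \eqref{F1}. Writing $a=\omega_{23}^4$, $b=\omega_{32}^4$, $c=\omega_{43}^2$ and using antisymmetry, the left-hand side of \eqref{F2} becomes $ab-bc+ca$. From \eqref{F1} the triple $(a,b,c)$ is proportional to $\big((\lambda_3-\lambda_4)^{-1},(\lambda_2-\lambda_4)^{-1},(\lambda_3-\lambda_2)^{-1}\big)$, and substituting these proportions the cyclic sum telescopes: its numerator over the common denominator is $(\lambda_3-\lambda_2)-(\lambda_3-\lambda_4)+(\lambda_2-\lambda_4)=0$. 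Hence \eqref{F2} holds.

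Finally, for \eqref{F3} I would use the Gauss equation $R(X,Y)Z=\langle AY,Z\rangle AX-\langle AX,Z\rangle AY$. Since $e_1\perp e_2,e_3$ it gives $R(e_2,e_3)e_1=0$, hence $\langle R(e_2,e_3)e_1,e_4\rangle=0$. Expanding the left-hand side through $R(X,Y)Z=\nabla_X\nabla_Y Z-\nabla_Y\nabla_X Z-\nabla_{[X,Y]}Z$ and inserting the connection coefficients, the diagonal coefficients $\omega_{ii}^1$ enter via $\nabla_{e_i}e_1=-\omega_{ii}^1e_i$, while every term carrying an off-diagonal $\omega_{ij}^1$ or a derivative $e_k(\omega_{ij}^1)$ drops out by the vanishing from the first step. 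What remains is exactly $\omega_{23}^4(\omega_{33}^1-\omega_{44}^1)=\omega_{32}^4(\omega_{22}^1-\omega_{44}^1)$, the first equality of \eqref{F3}; the second follows by the same computation for $\langle R(e_3,e_4)e_1,e_2\rangle=0$, a cyclic relabelling of $\{2,3,4\}$. I expect the first step to be the genuine obstacle: without the vanishing of the off-diagonal $\omega_{ij}^1$ the curvature expansion retains derivative terms $e_k(\omega_{ij}^1)$ that resist algebraic control, and it is the integrability of $\ker dH$ that forces the computation to collapse to the clean relations \eqref{F3}.
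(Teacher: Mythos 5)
Your proof is correct. The paper itself supplies no argument for this lemma --- it is quoted from \cite{fu-hong2018} with the remark that it is the four-dimensional version of Lemmas 3.5 and 3.6 there, granted Lemma \ref{lemma3.4} --- and your derivation is exactly the standard Gauss--Codazzi computation underlying that reference: the vanishing of the off-diagonal $\omega_{ij}^1$ (via integrability of $\ker dH$ plus the $e_1$-component of Codazzi) and of $\omega_{jj}^i$ for distinct $i,j\ge 2$ (via Lemma \ref{lemma3.4}), then the tangential Codazzi components for \eqref{F1} and the curvature components $\langle R(e_i,e_j)e_1,e_k\rangle=0$ for \eqref{F3}. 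The one genuine streamlining is your observation that \eqref{F2} is a purely algebraic consequence of \eqref{F1} via the telescoping identity $(\lambda_3-\lambda_2)-(\lambda_3-\lambda_4)+(\lambda_2-\lambda_4)=0$, which spares a separate curvature computation; I verified the sign bookkeeping $\omega_{34}^2=-\omega_{32}^4$, $\omega_{24}^3=-\omega_{23}^4$, $\omega_{42}^3=-\omega_{43}^2$ and the resulting form $ab-bc+ca$ of the left-hand side of \eqref{F2}, and it checks out.
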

\begin{lemma} {\rm([20])}
Under the assumptions as above, we have
\begin{align}
\omega_{22}^1\omega_{33}^1-2\omega_{23}^4\omega_{32}^4=-\lambda_2\lambda_3,  \label{F4}\\
\omega_{22}^1\omega_{44}^1-2\omega_{24}^3\omega_{42}^3=-\lambda_2\lambda_4, \label{F5}\\
\omega_{33}^1\omega_{44}^1-2\omega_{34}^2\omega_{43}^2=-\lambda_3\lambda_4.\label{F6}
\end{align}
\end{lemma}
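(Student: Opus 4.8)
The plan is to evaluate the single curvature component $\langle R(e_2,e_3)e_3,e_2\rangle$ in two different ways and compare. On the one hand, the Gauss equation $R(X,Y)Z=\langle AY,Z\rangle AX-\langle AX,Z\rangle AY$ together with $Ae_i=\lambda_ie_i$ gives $R(e_2,e_3)e_3=\lambda_2\lambda_3 e_2$, so that $\langle R(e_2,e_3)e_3,e_2\rangle=\lambda_2\lambda_3$. On the other hand, I will compute the same quantity intrinsically from the Levi-Civita connection and its coefficients $\omega_{ij}^k$; the two expressions are then forced to agree, and rearranging yields \eqref{F4}.

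Before expanding the curvature I would record which connection coefficients vanish in our frame. Besides the compatibility relations $\omega_{ki}^i=0$ and $\omega_{ki}^j=-\omega_{kj}^i$, the Codazzi relation $e_i(\lambda_j)=(\lambda_i-\lambda_j)\omega_{ji}^j$ together with $e_i(\lambda_j)=0$ (Lemma \ref{lemma3.4}) forces $\omega_{jj}^i=0$ for distinct $i,j\in\{2,3,4\}$. Moreover, since $e_k(\lambda_1)=0$ for $k\ge 2$, evaluating $[e_i,e_j]\lambda_1=0$ gives $\omega_{ij}^1=\omega_{ji}^1$, which combined with the Codazzi relation $(\lambda_j-\lambda_1)\omega_{ij}^1=(\lambda_i-\lambda_1)\omega_{ji}^1$ forces $\omega_{ij}^1=0$ for distinct $i,j\in\{2,3,4\}$; I shall also use $e_i(\omega_{jj}^1)=0$ from Lemma \ref{lemma3.4}. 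With these facts each covariant derivative collapses to a single surviving term: $\nabla_{e_3}e_3=\omega_{33}^1e_1$, $\nabla_{e_2}e_1=-\omega_{22}^1e_2$, $\nabla_{e_2}e_3=\omega_{23}^4e_4$, $\nabla_{e_3}e_2=\omega_{32}^4e_4$, $\nabla_{e_3}e_4=\omega_{34}^2e_2$, $\nabla_{e_4}e_3=\omega_{43}^2e_2$, and $[e_2,e_3]=(\omega_{23}^4-\omega_{32}^4)e_4$.

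Feeding these into $R(e_2,e_3)e_3=\nabla_{e_2}\nabla_{e_3}e_3-\nabla_{e_3}\nabla_{e_2}e_3-\nabla_{[e_2,e_3]}e_3$ and reading off the $e_2$-component, using $\omega_{34}^2=-\omega_{32}^4$, I obtain
\[
\langle R(e_2,e_3)e_3,e_2\rangle=-\omega_{22}^1\omega_{33}^1+\omega_{23}^4\omega_{32}^4+\omega_{43}^2(\omega_{32}^4-\omega_{23}^4).
\]
The main obstacle is to show that the cross terms collapse, i.e.\ that $\omega_{43}^2(\omega_{32}^4-\omega_{23}^4)=\omega_{23}^4\omega_{32}^4$, so that the right-hand side reduces to $-\omega_{22}^1\omega_{33}^1+2\omega_{23}^4\omega_{32}^4$. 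This is exactly where the proportionality \eqref{F1} of Lemma \ref{lemma4.1} enters: writing $\omega_{23}^4(\lambda_3-\lambda_4)=\omega_{32}^4(\lambda_2-\lambda_4)=\omega_{43}^2(\lambda_3-\lambda_2)=:\mu$ and using that the $\lambda_i$ are distinct, a short computation shows both sides equal $\mu^2/[(\lambda_2-\lambda_4)(\lambda_3-\lambda_4)]$, confirming the identity (the degenerate case $\mu=0$ being trivial). Equating the two evaluations then gives $\omega_{22}^1\omega_{33}^1-2\omega_{23}^4\omega_{32}^4=-\lambda_2\lambda_3$, which is \eqref{F4}; finally \eqref{F5} and \eqref{F6} follow verbatim by cyclically permuting the roles of the indices $2,3,4$.
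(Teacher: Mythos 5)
Your argument is correct: the paper itself offers no proof of this lemma (it is quoted from reference [20]), and your computation of $\langle R(e_2,e_3)e_3,e_2\rangle$ via the Gauss equation on one side and the connection coefficients on the other, with the cross terms collapsing through \eqref{F1}, is exactly the standard derivation carried out in that reference. The auxiliary vanishings $\omega_{jj}^i=\omega_{ij}^1=0$ and $e_i(\omega_{jj}^1)=0$ that you invoke all follow as you say from Lemma \ref{lemma3.4} and the Codazzi relations, and the permuted version of \eqref{F1} needed for \eqref{F5} and \eqref{F6} is covered by the general statement of Lemma \ref{lemma4.1}.
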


\begin{proof}[{\bf The proof of Theorem 1.1}] If the mean curvature $H$ is
constant, the first equation of \eqref{biharmonic condition} reduces
$H=0$ immediately. Assume that the mean curvature $H$ is
non-constant on $U_p$. According to Lemma \ref{lemma4.1}, we
distinguish it into the following cases A and B.
\medskip

\noindent
{\bf Case A}. $\omega_{23}^4\neq0$, $\omega_{32}^4\neq0$ and
$\omega_{43}^2\neq0$. 

In this case, equations \eqref{F1} and
\eqref{F3} reduce to
\begin{align*}
\frac{\omega_{33}^1-\omega_{44}^1}{\lambda_3-\lambda_4}
&=\frac{\omega_{33}^1-\omega_{22}^1}{\lambda_3-\lambda_2}
=\frac{\omega_{44}^1-\omega_{22}^1}{\lambda_4-\lambda_2},
\end{align*}
and hence there exist two smooth functions $\alpha$ and $\beta$
depending on $t$ such that
\begin{align}
\omega_{ii}^1=\alpha \lambda_i+\beta\label{F7}
\end{align}
for $i=2, 3, 4$. Differentiating with respect to $e_1$ on both sides
of equation \eqref{F7}, using \eqref{L2} and \eqref{L3} we get
\begin{align}
&e_1(\alpha)=\lambda_1(\alpha^2+1)+\alpha\beta,\label{F8}\\
&e_1(\beta)=\beta(\alpha\lambda_1+\beta).\label{F9}
\end{align}
Taking a sum on $i$ in \eqref{F7}  and using \eqref{sum1}, one has
\begin{align}
\sum_{i=2}^4\omega_{ii}^1=-3\alpha\lambda_1+3\beta.\label{F10}
\end{align}
Taking into account \eqref{F2}, equations \eqref{F4}, \eqref{F5} and
\eqref{F6} lead to
\begin{align*}
\omega_{22}^1\omega_{33}^1+\omega_{22}^1\omega_{44}^1+\omega_{33}^1\omega_{44}^1=
-\lambda_2\lambda_3-\lambda_2\lambda_4-\lambda_3\lambda_4,
\end{align*}
 which together with \eqref{F7} further reduces to
\begin{align}
(1+\alpha^2)(\lambda_2\lambda_3+\lambda_2\lambda_4+\lambda_3\lambda_4)+
2\alpha\beta(\lambda_2+\lambda_3+\lambda_4)+3\beta^2=0.\label{F11}
\end{align}
Since $S-\lambda_1^2=\sum_{i=2}^4\lambda_i^2$ and
$-3\lambda_1=\sum_{i=2}^4\lambda_i$, it follows from \eqref{F11}
that
\begin{align*}
(1+\alpha^2)\times \frac{1}{2}(10\lambda_1^2-S)-
6\alpha\beta\lambda_1+3\beta^2=0
\end{align*}
and hence
\begin{align}
(1+\alpha^2)S=10(1+\alpha^2)\lambda_1^2-
12\alpha\beta\lambda_1+6\beta^2.\label{F12}
\end{align}
Differentiating \eqref{F12} with respect to $e_1$ and using \eqref{F8}-\eqref{F9}, one has
\begin{align}
(1+\alpha^2)e_1(S)=&4\big\{5(1+\alpha^2)\lambda_1-3\alpha\beta\big\}e_1(\lambda_1)\label{F13}\\
&+2(10\alpha\lambda_1^2-\alpha
S-6\beta\lambda_1)\big\{\lambda_1(\alpha^2+1)+\alpha\beta\big\}\nonumber\\
&+12\beta(\beta-\alpha\lambda_1)(\alpha\lambda_1+\beta).\nonumber
\end{align}
Moreover, differentiating \eqref{sum1} with respect to $e_1$ and
using \eqref{L2}, we get
\begin{align}
-3e_1(\lambda_1)=\sum_{i=2}^4(\lambda_i-\lambda_1)\omega_{ii}^1.\nonumber
\end{align}
Using \eqref{F7} and \eqref{sum1},    the above equation
reduces to
\begin{align}
-3e_1(\lambda_1)&=\sum_{i=2}^4(\lambda_i-\lambda_1)(\alpha
\lambda_i+\beta) \label{F14}\\
&=\sum_{i=2}^4\big\{\alpha
\lambda_i^2+(\beta-\lambda_1\alpha)\lambda_i-\beta\lambda_1\big\}\nonumber\\
&=\alpha(S-\lambda_1^2)-3(\beta-\lambda_1\alpha)\lambda_1-3\beta\lambda_1 \nonumber\\
&=\alpha(2\lambda_1^2+S)-6\beta\lambda_1.\nonumber
\end{align}
By using \eqref{F12} and \eqref{F14}, we
eliminate $S$ to get
\begin{align}\label{F15}
   e_1(\lambda_1)=-\frac {1}{1+\alpha^2}(4\lambda_1^2 \alpha^3 -6\lambda_1\alpha^2\beta+2\alpha\beta^2+4\lambda_1^2\alpha-2\lambda_1\beta).
\end{align}
On the other hand, differentiating \eqref{F14} with respect to
$e_1$, it follows from \eqref{F8} and \eqref{F9} that
\begin{align}
-3e_1e_1(\lambda_1)=&(4\alpha\lambda_1-6\beta)e_1(\lambda_1)+\alpha
e_1(S)\label{F16}\\
&+(2\lambda_1^2+S)\big\{\lambda_1(\alpha^2+1)+\alpha\beta\big\}-6\beta\lambda_1(\alpha\lambda_1+\beta).\nonumber
\end{align}
Substituting \eqref{F10} into \eqref{L1} gives
\begin{align}
e_1e_1(\lambda_1)=3(-\alpha\lambda_1+\beta)e_1(\lambda_1)+\lambda_1S.\label{F17}
\end{align}
Eliminating the terms of $e_1e_1(\lambda_1)$ between \eqref{F16} and
\eqref{F17} yields
\begin{eqnarray}
  &(-5\alpha\lambda_1+3\beta)e_1(\lambda_1)+\alpha
e_1(S)+(2\lambda_1^2+S)\big\{\lambda_1(\alpha^2+1)+\alpha\beta\big\}\label{F18}\\
&-6\beta\lambda_1(\alpha\lambda_1+\beta)+3\lambda_1 S=0.\nonumber
\end{eqnarray}
Combining \eqref{F18} with \eqref{F14} we may eliminate
$e_1(\lambda_1)$ and hence
\begin{align}
3\alpha e_1(S)=&(-5\alpha\lambda_1+3\beta)\big\{(2\lambda_1^2+S)\alpha-6\beta\lambda_1\big\} \label{F19}\\
&-3(2\lambda_1^2+S)\big\{\lambda_1(\alpha^2+1)+\alpha\beta\big\}+18\beta\lambda_1(\alpha\lambda_1+\beta)-9\lambda_1
S.\nonumber
\end{align}
Also, combining \eqref{F13} with \eqref{F14}, we eliminate
$e_1(\lambda_1)$ to obtain
\begin{align}
3(1+\alpha^2)e_1(S)=&4\big\{5(1+\alpha^2)\lambda_1-3\alpha\beta\big\}\big\{6\beta\lambda_1-(2\lambda_1^2+S)\alpha\big\}\label{F20}\\
&+6(10\alpha\lambda_1^2-\alpha
S-6\beta\lambda_1)\big\{\lambda_1(\alpha^2+1)+\alpha\beta\big\}\nonumber\\
&+36\beta(\beta-\alpha\lambda_1)(\alpha\lambda_1+\beta).\nonumber
\end{align}
Eliminating the terms of $e_1(S)$ in \eqref{F19}-\eqref{F20} yields
\begin{align}
&\big\{(1+\alpha^2)(5\alpha\lambda_1+\beta)-4\alpha^2\beta\big\}\big\{6\beta\lambda_1-(2\lambda_1^2+S)\alpha\big\} \label{F21}\\
&+(22\alpha^2\lambda_1^2-12\alpha\beta\lambda_1+2\lambda_1^2+S-\alpha^2S)\big\{\lambda_1(\alpha^2+1)+\alpha\beta\big\}\nonumber\\
&+6\beta(2\alpha\beta-3\alpha^2\lambda_1-\lambda_1)(\alpha\lambda_1+\beta)
+3(1+\alpha^2)\lambda_1S=0.\nonumber
\end{align}
Applying \eqref{F12} to eliminate the terms of $S$ in \eqref{F21},
we derive
\begin{align}
&\big\{(1+\alpha^2)(5\alpha\lambda_1+\beta)-4\alpha^2\beta\big\}\big\{6\beta\lambda_1-2\alpha\lambda_1^2\big\} \nonumber\\
&+(22\alpha^2\lambda_1^2-12\alpha\beta\lambda_1+2\lambda_1^2)\big\{\lambda_1(\alpha^2+1)+\alpha\beta\big\}\nonumber\\
&+6\beta(2\alpha\beta-3\alpha^2\lambda_1-\lambda_1)(\alpha\lambda_1+\beta)\nonumber\\
&+\big\{-\alpha(5\alpha\lambda_1+\beta)+(1-\alpha^2)\lambda_1+3\lambda_1\big\}\big\{10(1+\alpha^2)\lambda_1^2-
12\alpha\beta\lambda_1+6\beta^2\big\}\nonumber\\
&+\frac{\alpha\beta(1+3\alpha^2)}{1+\alpha^2}\big\{10(1+\alpha^2)\lambda_1^2-
12\alpha\beta\lambda_1+6\beta^2\big\}=0\nonumber,
\end{align}
which reduces to
\begin{align}\label{F22}
    &8\alpha^6 \lambda_1^3-20\alpha^5\beta\lambda_1^2+16\alpha^4\beta^2\lambda_1-4\alpha^3\beta^3\\
    &+9\alpha^4 \lambda_1^3-14\alpha^3\beta\lambda_1^2+8\alpha^2\beta^2\lambda_1-2\alpha\beta^3\nonumber\\
    &-6\alpha^2 \lambda_1^3+6\alpha\beta\lambda_1^2-4\beta^2\lambda_1-7\lambda_1^3=0.\nonumber
\end{align}
Differentiating \eqref{F22} along $e_1$ and using \eqref{F8},
\eqref{F9} and \eqref{F15}, we have
\begin{align*}
    &\Big\{ 3\lambda_1^2(8\alpha^6+9\alpha^4-6\alpha^2-7)+2\lambda_1(6\alpha\beta-14\alpha^3\beta-20\alpha^5\beta)\\
    &+(16\alpha^4\beta^2+8\alpha^2\beta^2-4\beta^2) \Big\}
    \Big\{ 4\lambda_1^2 \alpha^3 -6\lambda_1\alpha^2\beta+2\alpha\beta^2+4\lambda_1^2\alpha-2\lambda_1\beta \Big\}\\
    &-(1+\alpha^2)\Big\{\lambda_1 (\alpha^2+1)+\alpha\beta\Big\}\Big\{48\lambda_1^3\alpha^5-100\lambda_1^2\alpha^4\beta
    +4\alpha^3(16\lambda_1 \beta^2+9\lambda_1^3)\\
    &-3\alpha^2(4\beta^3+14\lambda_1^2\beta)
    +2\alpha(8\lambda_1\beta^2-6\lambda_1^3)+(6\lambda_1^2\beta-2\beta^3)  \Big\}\\
    &-(1+\alpha^2)(\lambda_1 \alpha\beta+\beta^2)\Big\{-3\beta^2(4\alpha^3+2\alpha)
    +2\beta(16\lambda_1\alpha^4+8\lambda_1\alpha^2-4\lambda_1)\\
    & +(6\lambda_1^2\alpha-14\lambda_1^2\alpha^3-20\lambda_1^2\alpha^5)\Big\}=0,
\end{align*}
which leads to
\begin{align}\label{F23}
        &24\alpha^9\lambda_1^4 - 116\alpha^8\beta\lambda_1^3 + 188\alpha^7\beta^2\lambda_1^2 - 124\alpha^6\beta^3\lambda_1\\
        & + 36\alpha^7\lambda_1^4  - 145\alpha^6\beta\lambda_1^3 + 191\alpha^5\beta^2\lambda_1^2 - 110\alpha^4\beta^3\lambda_1\nonumber\\
        & - 36\alpha^5\lambda_1^4+76\alpha^4\beta\lambda_1^3 - 48\alpha^3\beta^2\lambda_1^2 + 15\alpha^2\beta^3\lambda_1\nonumber\\
        & - 84\alpha^3\lambda_1^4  + 123\alpha^2\beta\lambda_1^3- 51\alpha\beta^2\lambda_1^2 + 9\beta^3\lambda_1\nonumber\\
        & - 36\alpha\lambda_1^4  + 18\beta\lambda_1^3+ 28\alpha^5\beta^4 + 24\alpha^3\beta^4=0.\nonumber
\end{align}
To simplify the notations, \eqref{F22} and \eqref{F23}
can be respectively rewritten as
\begin{align}
&P_{13} \beta^3+P_{12} \beta^2 +P_{11}\beta+P_{10}=0,\label{F24}\\
&P_{24}\beta^4+P_{23} \beta^3+P_{22} \beta^2
+P_{21}\beta+P_{20}=0,\label{F25}
\end{align}
where
\begin{align*}
    &P_{13}=-4\alpha^3-2\alpha,\qquad
    P_{12}=16\alpha^4 \lambda_1+8\alpha^2 \lambda_1-4 \lambda_1,\\
    &P_{11}=-20\alpha^5 \lambda_1^2-14\alpha^3 \lambda_1^2+6\alpha \lambda_1^2,\qquad
    P_{10}=8\alpha^6 \lambda_1^3+9\alpha^4 \lambda_1^3-6\alpha^2 \lambda_1^3-7\lambda_1^3,\\
    &P_{24}=28\alpha^5 + 24\alpha^3,\qquad
    P_{23}= - 124\alpha^6\lambda_1- 110\alpha^4\lambda_1+ 15\alpha^2\lambda_1 + 9\lambda_1 ,\\
    &P_{22}=188\alpha^7 \lambda_1^2+ 191\alpha^5 \lambda_1^2- 48\alpha^3 \lambda_1^2 - 51\alpha \lambda_1^2  ,\\
    &P_{21}=- 116\alpha^8 \lambda_1^3- 145\alpha^6 \lambda_1^3+76\alpha^4 \lambda_1^3+ 123\alpha^2 \lambda_1^3+ 18 \lambda_1^3 ,\\
     &P_{20}=24\alpha^9\lambda_1^4+ 36\alpha^7\lambda_1^4- 36\alpha^5\lambda_1^4- 84\alpha^3\lambda_1^4- 36\alpha\lambda_1^4.
\end{align*}
Multiplying       \eqref{F24}  by $P_{24}\beta$ and \eqref{F25} by $P_{13}$,  we
eliminate the terms of $\beta^4$ to obtain
\begin{equation*}
(P_{12}P_{24}-P_{13}P_{23})\beta^3+(P_{11}P_{24}-P_{13}P_{22})\beta^2+(P_{10}P_{24}-P_{13}P_{21})\beta-P_{13}P_{20}=0,
\end{equation*}
which can be rewritten as
\begin{equation}\label{F26}
    P_{33}\beta^3+P_{32}\beta^2+P_{31}\beta+P_{30}=0,
\end{equation}
where
\begin{align*}
    &P_{33}=P_{12}P_{24}-P_{13}P_{23}=(-48\alpha^9-80\alpha^7-80\alpha^5-30\alpha^3+18\alpha)\lambda_1,\\
    &P_{32}=P_{11}P_{24}-P_{13}P_{22}=(192\alpha^{10}+ 268\alpha^8+22\alpha^6-156\alpha^4-102\alpha^2)\lambda_1^2,\\
    &P_{31}=P_{10}P_{24}-P_{13}P_{21}=(-240\alpha^{11}-368\alpha^9+62\alpha^7+304\alpha^5+150\alpha^3+36\alpha)\lambda_1^3,\\
    &P_{30}=-P_{13}P_{20}=(96\alpha^{12}+ 192\alpha^{10}-72\alpha^8-408\alpha^6-312\alpha^4-72\alpha^2) \lambda_1^4.
\end{align*}
Multiplying \eqref{F24} by $P_{33}$   and  \eqref{F26} by $P_{13}$,
 we  eliminate the terms on $\beta^3$ to  get
\begin{equation}\label{F27}
    P_{42}\beta^2+P_{41}\beta+P_{40}=0,
\end{equation}
where
\begin{align*}
    P_{42}&=P_{12}P_{33}-P_{13}P_{32}\\
    &=(-208\alpha^{11}-1104\alpha^9-1380\alpha^7-352\alpha^5+60\alpha^3-72\alpha)\lambda_1^2,\\
    P_{41}&=P_{11}P_{33}-P_{13}P_{31}\\
    &=(320\alpha^{12}+ 1944\alpha^{10}+2580\alpha^8+788\alpha^6+12\alpha^4+180\alpha^2)\lambda_1^3,\\
    P_{40}&=P_{10}P_{33}-P_{13}P_{30}\\
    &=(-112\alpha^{13}-976\alpha^{11}-1920\alpha^9-1150\alpha^7-10\alpha^5-42\alpha^3-126\alpha)\lambda_1^4.
\end{align*}
Multiplying \eqref{F27} by $P_{13}\beta$ and using \eqref{F24} again, one has
\begin{align}\label{F28}
    P_{52}\beta^2+P_{51}\beta+P_{50}=0,
\end{align}
where
\begin{align*}
    P_{52}=&(-2048\alpha^{15}-10912\alpha^{13}-15872\alpha^{11}-3944\alpha^9\\
    & +5288\alpha^7+1480\alpha^5-456\alpha^3+288\alpha)\lambda_1^3,\\
    P_{51}=&(3712\alpha^{16}+20864\alpha^{14}+32176\alpha^{12}+ 11296\alpha^{10}\\
    &-6892\alpha^8-1700\alpha^6+780\alpha^4-684\alpha^2)\lambda_1^4,\\
    P_{50}=&(-1664\alpha^{17}-10704\alpha^{15}-19728\alpha^{13}-7156\alpha^{11}\\
    & +13320\alpha^9+11736\alpha^7+1456\alpha^5+12\alpha^3+504\alpha)\lambda_1^5.
\end{align*}
Eliminating the terms of $\beta^2$ in \eqref{F27}-\eqref{F28} gives
\begin{align}\label{F29}
    (P_{41}P_{52}-P_{42}P_{51})\beta=-(P_{40}P_{52}-P_{42}P_{50}),
\end{align}
where
\begin{align*}
    P_{41}P_{52}-P_{42}P_{51}=&\Big\{116736\alpha^{27}+964608\alpha^{25}+3273216\alpha^{23}\\
    &+6086784\alpha^{21}+7013568\alpha^{19}+5538720\alpha^{17} \\
    & +3434064\alpha^{15}+1872528\alpha^{13}+852864\alpha^{11}+289728\alpha^9\\
    &+77904\alpha^7+18576\alpha^5+2592\alpha^3\Big\}\lambda_1^6,\\
    -(P_{40}P_{52}-P_{42}P_{50})=&\Big\{116736\lambda^7\alpha^{28} + 842496\lambda^7\alpha^{26} + 1857024\lambda^7\alpha^{24} \\
    &- 613632\lambda^7\alpha^{22} - 10278336\lambda^7\alpha^{20} - 21542832\lambda^7\alpha^{18} \\
    &- 23399376\lambda^7\alpha^{16} - 15074160\lambda^7\alpha^{14} - 5949072\lambda^7\alpha^{12} \\
    &- 1571856\lambda^7\alpha^{10} - 402288\lambda^7\alpha^8 - 111312\lambda^7\alpha^6 - 15984\lambda^7\alpha^4\Big\}\lambda_1^7.
\end{align*}
By substituting \eqref{F29} into \eqref{F27}, one  derives
\begin{align}\label{F30}
    &P_{42}(P_{40}P_{52}-P_{42}P_{50})^2-P_{41}(P_{40}P_{52}-P_{42}P_{50})(P_{41}P_{52}-P_{42}P_{51})\\
    &+P_{40}(P_{41}P_{52}-P_{42}P_{51})^2=0,\nonumber
\end{align}
namely,
\begin{align*}
    &\Big\{484844765184\alpha^{65} + 12599707041792\alpha^{63} + 168848312500224\alpha^{61} \\
&+ 1539102781734912\alpha^{59} + 10358767180578816\alpha^{57} + 52901614335688704\alpha^{55} \\
&+ 208041234970705920\alpha^{53} + 638282602571268096\alpha^{51} + 1546611187674415104\alpha^{49}\\
& + 2991538078614835200\alpha^{47} + 4659369092304187392\alpha^{45} + 5883550040047592448\alpha^{43}\\
& + 6057015443197185024\alpha^{41} + 5112599987173082112\alpha^{39} + 3566610599450135040\alpha^{37}\\
& + 2084666842179740160\alpha^{35} + 1044195650378735616\alpha^{33} + 461769373555605504\alpha^{31}\\
& + 184830143553160704\alpha^{29} + 67312785426974208\alpha^{27} + 22101468989783040\alpha^{25} \\
&+ 6548262801067008\alpha^{23} + 1779181049894400\alpha^{21} + 441651900169728\alpha^{19} \\
&+ 96378291542016\alpha^{17} + 18243537610752\alpha^{15} + 3112242227712\alpha^{13}\\
& + 445229250048\alpha^{11} + 38268370944\alpha^9 + 846526464\alpha^7\Big\}\lambda_1^{16}=0.
\end{align*}
Since $\lambda_1\neq 0$, the above equation is a non-trivial polynomial equation concerning $\alpha$ with constant coefficients. This implies that $\alpha$ must be a constant.
It follows from \eqref{F8} that
\begin{equation}\label{F31}
    \beta=-\frac {\alpha^2+1}{\alpha}\lambda_1.
\end{equation}
Substituting \eqref{F31} into \eqref{F9} and \eqref{F15},   we have
\begin{align}
    e_1(\lambda_1)&=\lambda_1\Big(\alpha\lambda_1-\frac{1+\alpha^2}{\alpha}\lambda_1\Big)=-\frac {1}{\alpha}\lambda_1^2,\label{F32}\\
    e_1(\lambda_1)&=-\frac{4\alpha^3+6\alpha(\alpha^2+1)+\frac{2(1+\alpha^2)^2}{\alpha}+4\alpha+\frac{2(1+\alpha^2)}{\alpha}}{1+\alpha^2}\lambda_1^2\label{F33}\\
    &=-\frac{4(3\alpha^2+1)}{\alpha}\lambda_1^2.\nonumber
\end{align}
Combining \eqref{F32} with \eqref{F33} yields that $4\alpha^2+1=0$, which is impossible.
\medskip

\noindent
{\bf Case B.}   $\omega_{23}^4=\omega_{32}^4=\omega_{43}^2=0$ at any
point $p$ in $V$. 

In this case, it follows  from \eqref{F4},
\eqref{F5}, \eqref{F6} that
\begin{align}
\omega_{22}^1\omega_{33}^1=-\lambda_2\lambda_3,  \label{G1}\\
\omega_{22}^1\omega_{44}^1=-\lambda_2\lambda_4, \label{G2}\\
\omega_{33}^1\omega_{44}^1=-\lambda_3\lambda_4.\label{G3}
\end{align}
Then we divide Case B into the two subcases B.1 and B.2.

\noindent
{\bf Case B.1.}  All principal curvatures $\lambda_2$,
$\lambda_3$, $\lambda_4$ are nonzero.

It follows from \eqref{G1}, \eqref{G2}, \eqref{G3} that
all $\omega_{22}^1$, $\omega_{33}^1$, $\omega_{44}^1$ are nonzero as well. Combining \eqref{G1} with
\eqref{G2} gives
\begin{align}
\frac{\omega_{33}^1}{\omega_{44}^1}=\frac{\lambda_3}{\lambda_4},\nonumber
\end{align}
which together with \eqref{G3} gives
\begin{align}
(\omega_{33}^1)^2+\lambda_3^2=0.\nonumber
\end{align}
It follows that  $\lambda_3=\omega_{33}^1=0$, which is a contradiction.

\noindent
{\bf Case B.2.} At least one of principal curvatures $\lambda_2$,
$\lambda_3$, $\lambda_4$ is zero. 

Without loss of generality, we
assume $\lambda_4=0$ at some point $p$ on $M^4$. It follows from
\eqref{L2} that $\omega_{44}^1=0$ at $p$. Hence \eqref{sum1} becomes
\begin{eqnarray}\label{G4}
\lambda_2+\lambda_3=-3\lambda_1.
\end{eqnarray}
Differentiating \eqref{G4} with respect to $e_1$ and
using \eqref{L2}, we have
\begin{align}\label{G5}
-3e_1(\lambda_1)=(\lambda_2-\lambda_1)\omega_{22}^1+(\lambda_3-\lambda_1)\omega_{33}^1.
\end{align}
Differentiating \eqref{G5} with respect to $e_1$, we apply
\eqref{L2}-\eqref{L3} to get
\begin{align}\label{G6}
-3e_1e_1(\lambda_1)=&2(\lambda_2-\lambda_1)(\omega_{22}^1)^2
+2(\lambda_3-\lambda_1)(\omega_{33}^1)^2\\
&-e_1(\lambda_1)(\omega_{22}^1+\omega_{33}^1)+(\lambda_2-\lambda_1)\lambda_2 \lambda_1+
(\lambda_3-\lambda_1)\lambda_3 \lambda_1.\nonumber
\end{align}
Furthermore, \eqref{L1} becomes
\begin{align}\label{G7}
e_1e_1(\lambda_1)=e_1(\lambda_1)(\omega_{22}^1+\omega_{33}^1)+ \lambda_1
(\lambda_1^2+\lambda_2^2+\lambda_3^2).
\end{align}
Eliminating the terms of $e_1e_1(\lambda_1)$ between \eqref{G6} and
\eqref{G7}, it gives
\begin{align}
&2e_1(\lambda_1)(\omega_{22}^1+\omega_{33}^1)+2(\lambda_2-\lambda_1)(\omega_{22}^1)^2
+2(\lambda_3-\lambda_1)(\omega_{33}^1)^2\label{G8}\\
&+(\lambda_2-\lambda_1)\lambda_2 \lambda_1+
(\lambda_3-\lambda_1)\lambda_3 \lambda_1+3\lambda_1
(\lambda_1^2+\lambda_2^2+\lambda_3^2)=0.\nonumber
\end{align}
Multiplying $\omega_{22}^1$ or $\omega_{33}^1$ to the both sides of equation \eqref{G5} respectively, it follows from  using \eqref{G1} that
\begin{align*}
(\lambda_2-\lambda_1)(\omega_{22}^1)^2=-3e_1(\lambda_1)\omega_{22}^1+(\lambda_3-\lambda_1)\lambda_2\lambda_3,\\
(\lambda_3-\lambda_1)(\omega_{33}^1)^2=-3e_1(\lambda_1)\omega_{33}^1+(\lambda_2-\lambda_1)\lambda_2\lambda_3.
\end{align*}
Substituting the above two equation into \eqref{G8}, we obtain
\begin{align}
&-4e_1(\lambda_1)(\omega_{22}^1+\omega_{33}^1)+2(\lambda_3-\lambda_1)\lambda_2\lambda_3
+2(\lambda_2-\lambda_1)\lambda_2\lambda_3\label{G9}\\
&+(\lambda_2-\lambda_1)\lambda_2 \lambda_1+
(\lambda_3-\lambda_1)\lambda_3 \lambda_1+3\lambda_1
(\lambda_1^2+\lambda_2^2+\lambda_3^2)=0.\nonumber
\end{align}
Taking into account \eqref{G4} and eliminating $\lambda_3$ in \eqref{G9}, one has
\begin{align}\label{G10}
2e_1(\lambda_1)(\omega_{22}^1+\omega_{33}^1)=9\lambda_1\lambda_2^2+27\lambda_1^2\lambda_2+21\lambda_1^3.
\end{align}
It follows from \eqref{G5}, \eqref{G1} and \eqref{G4} that \eqref{G10} is simplified to
\begin{align}\label{G11}
2(\lambda_2-\lambda_1)(\omega_{22}^1)^2+2(\lambda_3-\lambda_1)(\omega_{33}^1)^2
=-17\lambda_1\lambda_2^2-51\lambda_1^2\lambda_2-63\lambda_1^3.
\end{align}
Differentiating \eqref{G10} with respect to $e_1$, it follows from using \eqref{F2} and \eqref{F3} that
\begin{align*}
&2e_1e_1(\lambda_1)(\omega_{22}^1+\omega_{33}^1)+2e_1(\lambda_1)\Big\{(\omega_{22}^1)^2+(\omega_{33}^1)^2+
\lambda_1\lambda_2+\lambda_1\lambda_3\Big\}\\
&=(9\lambda_2^2+54\lambda_1\lambda_2+63\lambda_1^2)e_1(\lambda_1)+(18\lambda_1\lambda_2+27\lambda_1^2)(\lambda_2-\lambda_1)
\omega_{22}^1,
\end{align*}
which, together with \eqref {G7}, \eqref{G4} and \eqref{G1},  implies
\begin{align}\label{G12}
&e_1(\lambda_1)\Big\{4(\omega_{22}^1)^2+4(\omega_{33}^1)^2-69\lambda_1^2-42\lambda_1\lambda_2-5\lambda_2^2\Big\}\\
&+(47\lambda_1^3+3\lambda_1^2\lambda_2-14\lambda_1\lambda_2^2)\omega_{22}^1+(20\lambda_1^3+12\lambda_1^2\lambda_2+4\lambda_1\lambda_2^2)\omega_{33}^1=0.\nonumber
\end{align}
Eliminating the terms of $e_1(\lambda_1)$ between \eqref{G10} and
\eqref{G12} yields
\begin{align}
&(9\lambda_1\lambda_2^2+27\lambda_1^2\lambda_2+21\lambda_1^3)\Big\{4(\omega_{22}^1)^2+
4(\omega_{33}^1)^2-69\lambda_1^2-42\lambda_1\lambda_2-5\lambda_2^2\Big\}\nonumber\\
&+2(\omega_{22}^1+\omega_{33}^1)(47\lambda_1^3+3\lambda_1^2\lambda_2-14\lambda_1\lambda_2^2)\omega_{22}^1\nonumber\\
&+2(\omega_{22}^1+\omega_{33}^1)(20\lambda_1^3+12\lambda_1^2\lambda_2+4\lambda_1\lambda_2^2)\omega_{33}^1=0,\nonumber
\end{align}
which, together with \eqref {G1}, leads to
\begin{align}\label{G13}
&2(4\lambda_1\lambda_2^2+57\lambda_1^2\lambda_2+89\lambda_1^3)(\omega_{22}^1)^2
+2(22\lambda_1\lambda_2^2+66\lambda_1^2\lambda_2+62\lambda_1^3)(\omega_{33}^1)^2\\
=&(9\lambda_1\lambda_2^2+27\lambda_1^2\lambda_2+21\lambda_1^3)
(69\lambda_1^2+42\lambda_1\lambda_2+5\lambda_2^2)\nonumber\\
&-2\lambda_2(3\lambda_1+\lambda_2)
(-10\lambda_1\lambda_2^2+15\lambda_1^2\lambda_2+67\lambda_1^3).\nonumber
\end{align}
Equations \eqref{G1}, \eqref{G11} and \eqref{G13}  can be rewritten in the following forms:
\begin{align}
&\omega_{22}^1\omega_{33}^1= L,\label{G14} \\
&M_1(\omega_{22}^1)^2+N_1(\omega_{33}^1)^2 = K_1,\label{G15} \\
&M_2(\omega_{22}^1)^2+N_2(\omega_{33}^1)^2 = K_2,\label{G16}
\end{align}
where
\begin{align*}
    &L=3\lambda_1\lambda_2+\lambda_2^2,\\
    &M_1=2\lambda_2-2\lambda_1, \quad N_1=-8\lambda_1-2\lambda_2,\\
    &K_1=-17\lambda_1\lambda_2^2-51\lambda_1^2\lambda_2-63\lambda_1^3,\\
    &M_2=8\lambda_1\lambda_2^2+114\lambda_1^2\lambda_2+178\lambda_1^3,\\
    &N_2=44\lambda_1\lambda_2^2+132\lambda_1^2\lambda_2+124\lambda_1^3,\\
    &K_2=1449\lambda_1^5+2343\lambda_1^4\lambda_2+1636\lambda_1^3\lambda_2^2+543\lambda_1^2\lambda_2^3+65\lambda_1\lambda_2^4.
\end{align*}
Similarly to the above cases, we   eliminate the terms of
$\omega_{22}^1$ and $\omega_{33}^1$ from \eqref{G14}, \eqref{G15}
and \eqref{G16}. At last, we obtain
\begin{align}\label{G17}
    &M_1(M_1N_2-M_2N_1)^2L^2+N_1(M_1K_2-M_2K_1)^2\\
    &-K_1(M_1N_2-M_2N_1)(M_1K_2-M_2K_1)=0,\nonumber
\end{align}
which reduces to
\begin{align*}
    &-12168\lambda_1^2\lambda_2^{11}-170352\lambda_1^3\lambda_2^{10}-1361256\lambda_1^4\lambda_2^9-7125456\lambda_1^5\lambda_2^8\\
    &-26220576\lambda_1^6\lambda_2^7-68130960\lambda_1^7\lambda_2^6-119958840\lambda_1^8\lambda_2^5-122714784\lambda_1^9\lambda_2^4\\
    &-15894792\lambda_1^{10}\lambda_2^3+135723600\lambda_1^{11}\lambda_2^2+162996624\lambda_1^{12}\lambda_2+62868960\lambda_1^{13}=0.
\end{align*}
Therefore, we get a polynomial equation concerning $k=\frac {\lambda_1}{\lambda_2}$ with constant coefficients as follows:
\begin{align*}
    &62868960k^{13}+162996624k^{12}
    +135723600k^{11}-15894792k^{10}\\
    &-122714784k^9-119958840k^8
    -68130960k^7-26220576k^6\\
    &-7125456k^5-1361256k^4
    -170352k^3-12168k^2=0.
\end{align*}
The above equation shows that $k=\frac {\lambda_1}{\lambda_2}$ must be a constant. Applying $\lambda_2=\frac {\lambda_1}{k}$ and \eqref{G4} to \eqref{L2}, it gives
\begin{align}
    \omega_{22}^1&=\frac {e_1(\lambda_2)}{\lambda_2-\lambda_1}=\frac {e_1(\lambda_1)}{(1-k)\lambda_1},\label{G18}\\
    \omega_{33}^1&=\frac {e_1(\lambda_3)}{\lambda_3-\lambda_1}=\frac {(3k+1)e_1(\lambda_1)}{(4k+1)\lambda_1}.\label{G19}
\end{align}
It follows from \eqref{G18} and \eqref{G19} that \eqref{L1} and
\eqref{G1} become
\begin{align}
    e_1e_1(\lambda_1)&=\frac {(3k^2-6k-2)e_1^2(\lambda_1)}{(4k+1)(k-1)\lambda_1}+\frac {(10k^2+6k+2)\lambda_1^3}{k^2},\label{G20}\\
    e_1^2(\lambda_1)&=\frac {(1-k)(4k+1)}{k^2}\lambda_1^4.\label{G21}
\end{align}
Taking into account $e_1(\lambda_1)\neq 0$ and differentiating \eqref{G21} yield
\begin{equation}\label{G22}
    e_1e_1(\lambda_1)=\frac {2(1-k)(4k+1)}{k^2}\lambda_1^3.
\end{equation}
Substituting \eqref{G21}-\eqref{G22} into \eqref{G20} gives
\begin{equation*}
    15k^2+6k+2=0.
\end{equation*}
Obviously, the above quadratic equation has no real root, which
contradicts to our assumption. Therefore, we complete a proof
of Theorem 1.1. \end{proof}

\begin{remark}
In \cite{Gupta2016}, the authors claimed that they proved the
minimality of biharmonic hypersurfaces in $\mathbb R^5$.
Unfortunately, there is a crucial error in their proofs. In Page 11 of
\cite{Gupta2016}: Since $a_1=-b_1 =-c_1$, the equation (3.43) in \cite{Gupta2016} should be
corrected as
\begin{align}\label{U1}
-4H(\lambda_4-\lambda_3)(2\lambda_2-\lambda_3-\lambda_4)a_1+b_2+b_3-c_2-c_3=k_2-k_3.
\end{align}
 Combining (3.41), (3.42) with
\eqref{U1} in \cite{Gupta2016},  one has
\begin{align}\label{U2}
4H\Big\{(\lambda_3-\lambda_2)^2-(\lambda_4-\lambda_2)^2-(\lambda_4-\lambda_3)(2\lambda_2-\lambda_3-
\lambda_4)\Big\} a_1=0.
\end{align}
However, \eqref{U2} gives no further information due to the fact that
\begin{align}
(\lambda_3-\lambda_2)^2-(\lambda_4-\lambda_2)^2-(\lambda_4-\lambda_3)(2\lambda_2-\lambda_3-
\lambda_4)\equiv0. \nonumber
\end{align}
In fact, the expression $k_1-k_2+k_3-k_1+k_2-k_3\equiv0$ must yield
no further information. Those arguments in \cite{Gupta2016} are no longer useful for
proceeding with Chen's conjecture.
\end{remark}

\medskip



\end{document}